\newcommand{\R}{\ensuremath{\mathbb{R}}}
\newcommand{\rev}[1]{{#1}}
\newtheorem{prop}{Proposition}
\theoremstyle{definition}
\theoremstyle{remark}
\newtheorem{remark}{Remark}
\newtheorem{example}{Example}
\def\1{\mathbf{1}}
\def\0{\mathbf{0}}
\newcommand{\conv}{\ensuremath{\operatorname{conv}}}
\newcommand{\diag}{\operatorname{diag}}
\newcommand{\prob}{\operatorname{\mathbf{Prob}}}
\newcommand{\cP}{\ensuremath{\text{OPT}}}
\newcommand{\ropt}{\ensuremath{\text{COPT}}}
\newcommand{\exclude}[1]{}
\newcommand{\BD}[1]{\normalsize{\mathbf{#1}}}
\newcommand{\leqnomode}{\tagsleft@true}
\newcommand{\reqnomode}{\tagsleft@false}
\author{Alper Atamt\"{u}rk and Hyemin Jeon}
\title{Lifted Polymatroid Inequalities for Mean-Risk Optimization with Indicator Variables}
\thanks{ \noindent \hskip -5mm
	A. Atamt\"urk: Industrial Engineering \& Operations Research, University of California, Berkeley, CA 94720-1777.
	\texttt{atamturk@berkeley.edu}   \\
	H. Jeon: Industrial Engineering \& Operations Research, University of California, Berkeley, CA 94720-1777. \texttt{hyemin.jeon@berkeley.edu }
}
\begin{document}
\maketitle



\BCOLReport{17.01}

\begin{abstract} \vskip -1cm
We investigate a mixed $0-1$ conic quadratic optimization problem with indicator variables arising in mean-risk optimization. 
The indicator variables are often used to model non-convexities such as fixed charges or cardinality constraints.
Observing that the problem reduces to a submodular function minimization for its binary restriction,
we derive three classes of strong convex valid inequalities by lifting the polymatroid inequalities on the binary variables.
Computational experiments demonstrate the effectiveness of the inequalities in strengthening the convex relaxations and,
thereby, improving the solution times for mean-risk problems with fixed charges and cardinality constraints significantly. \\

\noindent
\textbf{Keywords:} Risk, submodularity, polymatroid, conic integer optimization, valid inequalities.
\end{abstract}


\begin{center}
March 2017; 
October 2017; \rev{May 2018}
\end{center}

\section{Introduction}
\label{sec:intro}

Optimization problems with a conic quadratic objective arise often 
when modeling uncertainty with a mean-risk utility.
We motivate such a model for an investment problem with a parametric Value-at-Risk (VaR) minimization objective.
Given random variables $\ell_i, \  i \in N,$ representing the uncertain loss in asset $i$, 
let $y_i$ denote the amount invested in asset $i \in N$.
Then, for small  $\epsilon > 0$,  
minimizing the Value-at-Risk with confidence level $1-\epsilon$ is stated as
\begin{center}
\begin{minipage}{0.8\textwidth}
\leqnomode
\begin{align*}
\label{eq:VaR} \tag{VaR}
\begin{split}
 \zeta(\epsilon) = \min \ & \bigg \{ z :  \prob \left( \ell'y > z \right) \leq \epsilon, \ \ y \in Y \bigg \} , 
\end{split}
\end{align*}
\reqnomode
\end{minipage}
\end{center}
\noindent where losses greater than $\zeta(\epsilon)$ occur with probability no more than $\epsilon$. 
Here, $Y$ represents the set of feasible investments.
If $\ell_i$'s are independent normally distributed random variables with mean $\mu_i$ and variance $\sigma^2_i$, problem \eqref{eq:VaR} is equivalent to the following mean-risk optimization problem: 
\begin{align} \label{eq:conic-opt}
\min \ & \bigg \{  \mu' y + \Omega \sqrt{\sum_{i \in N} \sigma_i^2 y_i^2} : \ y \in Y \bigg \}, 
\end{align}
where $\Omega = \Phi^{-1}(1-\epsilon)$ and $\Phi$ is the cumulative distribution function (c.d.f.) of the standard normal distribution \cite{Birge:SPbook}.
If only the mean and variance of the distribution are known, one can write a robust version by letting $\Omega = \sqrt{(1-\epsilon)/\epsilon}$, which provides an upper bound on the worst-case VaR
\cite{bertsimas.popescu:05,ghaoui.etal:03}.
Alternatively, if $\ell_i$'s are independent and symmetric with support $[u_i - \sigma_i, u_i + \sigma_i]$, then letting $\Omega = \sqrt{\ln(1/\epsilon)}$ gives an upper bound on
on the worst-case VaR as well \cite{BN:robust-mp}. Hence, under different assumptions on the random variable $\ell$, one arrives at different instances of the mean-risk model \eqref{eq:conic-opt} with a conic quadratic objective.
Ahmed \cite{ahmed:06} studies the complexity and tractability of various stochastic objectives for mean-risk optimization.

The objective of the mean-risk optimization problem \eqref{eq:conic-opt} is a conic quadratic function in $y$, hence convex. 
If the feasible set $Y$ is a tractable convex set as well, then \eqref{eq:conic-opt} is an efficiently-solvable convex optimization problem \cite{NN:convex-book}. In practice, though, most problems are accompanied
with non-convex side constraints, such as a restriction on the maximum number of non-zero variables or fixed charges \cite{AAG:matchup,AAG:cruise,AG:arcset, benson.saglam:14,B:quad-mip,bonami.lejeune:09,DKRA:ramping} that are needed to obtain more realistic and implementable solutions. To model such non-convexities it is convenient to introduce auxiliary binary variables $x_i, \ i \in N,$ to indicate whether $y_i$ is non-zero or not. 
The so-called \textit{on-off constraints} $0 \le y_i \leq u_i x_i$, where $u_i$ is an upper bound on $y_i$, $i \in N$, model whether asset $i$ is in the solution or not. By appropriately scaling $y_i$, we assume, without loss of generality, that $u_i = 1$ for all $i \in N$. The non-convexity introduced by the on-off constraints is a major challenge in solving practical mean-risk optimization problems. In order to address this difficulty, 
in this paper, we derive strong convex relaxations for the conic quadratic mixed-integer set with indicator variables:
\begin{align}
F = \bigg \{ (x,y,z) \in \{0,1\}^N \times \R^N_+ \times \R_+: \sigma + \sum_{i \in N} a_i y_i^2  \le z^2, \ \0 \le y \le x  \bigg \},
\end{align}
where $\sigma \ge 0$ and $a_i > 0, \ i \in N$. 
\ignore{
We also consider the optimization problem over $F$:
\begin{align*}
\text{(OPT)} \ \ \ \ \ 
 \min \bigg  \{ c'x + d'y + z: (x,y,z) \in F \bigg \},
\end{align*}
which will be useful in proving the validity of the inequalities for $F$.
}

Problem \eqref{eq:conic-opt} is a special case of the mean-risk optimization problem
\begin{align}
\label{eq:cov} 
\begin{split} 
\ \min  & \bigg \{ \mu' y + \Omega  \sqrt{y' Q y}: y \in Y \bigg \} 
\end{split}
\end{align}
with a positive semidefinite covariance matrix $Q$.
By decomposing $Q = V + D $, where $V, D \succeq 0 $ and $D$ is a diagonal matrix,
problem \eqref{eq:cov} is equivalently written as
\begin{center}
	\begin{minipage}{0.8\textwidth}
		\leqnomode
		\begin{align*}
		\label{eq:cqip-r} 
		\begin{split}
		\min \ &   \mu y+  \Omega z \\
		\text{s.t. } & y'D y + s^2 \le z^2 \\
		& y' V y \le s^2 \\
		& y \in Y, \ z \in \R_+.
		\end{split}
		\end{align*}
		\reqnomode
	\end{minipage}
\end{center}
Indeed, for high dimensional problems such a decomposition is readily available, as a low-rank factor covariance matrix $V$ is estimated separately from the residual variance matrix $D$ to avoid ill-conditioning  \cite{grinold2000active}. Observe that the  first constraint above is a conic quadratic with a diagonal matrix.
Therefore, the valid inequalities derived here for the diagonal case can be applied more generally in the presence of correlations after constructing a suitable diagonal relaxation. We provide computational experiments on the application of the results for the general case with correlations as well.

\subsubsection*{Literature}

Utilizing diagonal matrices is standard for constructing convex relaxations in binary quadratic optimization \cite{anstreicher2012convex, poljak1995convex}. In particular, for $x \in \{0,1\}^n$, 
\[
x'Qx \le z \iff x'(Q-D)x + \diag(D)'x \le z
\]
with a diagonal matrix $D$ satisfying $Q-D \succeq 0$.
This transformation is based on the ideal (convex hull) representation of the
separable quadratic term $x'Dx$ as a linear term $\diag(D)'x$ for $x \in \{0,1\}^n$.

A similar approach is also available for convex quadratic optimization with indicator variables. For $x \in \{0,1\}^n$ and $y \in \R^n \text{ s.t. } \0 \le y \le x$, we have
\[
y'Qy \le z \iff y'(Q-D)y + \diag(D)'t \le z, \ y_i^2 \le x_i t_i
\]
with $t \in \R^n_+$ \cite{akturk.atamturk.gurel:09,gunluk.linderoth:10}. This transformation is based on the ideal representation of each
quadratic term $D_{ii}y_i^2$ subject to on-off constraints as a linear term $D_{ii}t_i$ along with a rotated cone constraint $y_i^2 \le x_i t_i$.
Decomposing $Q$ for diagonalization is also studied for an effective application of 
linear perspective cuts \cite{frangioni.gentile:07}. 

For the conic quadratic constraint $\sqrt{x'Dx} \le z$, however, the terms are \textit{not} separable even for the diagonal case, and simple transformations as in the quadratic cases above are not sufficient to arrive at an ideal convex reformulation. For the pure binary case, Atamt\"{u}rk and Narayanan \cite{atamturk.narayanan:08} exploit the submodularity of the underlying set function to describe its 
convex lower envelope via polymatroid  inequalities. Atamt\"urk and G\'omez \cite{atamturk.gomez:16} \rev{describe a variety of applications for this model and}
give strong valid inequalities for the mixed $0-1$ case without the on-off constraints. 
The ideal (convex hull) representation for the conic quadratic mixed $0-1$ set with indicator variables $F$ remains an open question. We show, however, that exploiting the submodularity of the underlying set function for the $0-1$ restrictions is critical in deriving strong convex relaxations for $F$.  
\autoref{tab:review} summarizes the results for the related sets described above. 
In addition, general conic mixed-integer cuts \cite{AN:conicmir:ipco}, lift-and-project cuts \cite{CI:cmip}, disjunctive cuts \cite{terlaky:disj,KilincKarzan2015} are also applicable to the conic mixed-integer set $F$ considered here.

\begin{table}
\centering
\caption{Convex hull representations for $x \in \{0,1\}^n, y \in \R^n_+, z \in \R_+$.}
\label{tab:review}
{\setlength{\extrarowheight}{5pt}
\begin{tabular}{l|c|c}
\hline \hline
 & Separable Quadratic & Conic Quadratic \\
\hline
Pure $0-1$ & $x'Dx \le z$: \cite{anstreicher2012convex,poljak1995convex}  & $\sqrt{x'Dx } \le z$:  \cite{atamturk.narayanan:08} \\
Mixed $0-1$ & $y'Dy \le z, \ \0 \le y \le x$: \cite{akturk.atamturk.gurel:09,gunluk.linderoth:10} &
$\sqrt{y'Dy} \le z, \ \0 \le y \le x:$  \ \ ? \\
\hline \hline
\end{tabular}}
\end{table}

\subsubsection*{Notation}
Throughout, we denote by $\0$ the vector of zeroes,  by $\1$ the vector of ones, 
and by $e_i$ the $i$th unit vector.  $N := \{1, 2, \ldots, n\}$ and 
$[k] := \{1, 2, \ldots, k\}$. For a vector $a \in \R^N$, let $a(S) = \sum_{i \in S} a_i$, $S \subseteq N$.
We use $(\cdot)^+$ to denote $\max\{\cdot,0\}$.

\subsubsection*{Outline}
The remainder of the paper is organized as follows. In Section~\ref{sec:prelim} we review the polymatroid inequalities for
the binary restriction of the mean-risk problem and give a polynomial algorithm for an optimization problem over $F$. 
In Section~\ref{sec:cuts} we introduce three classes of convex valid 
inequalities for $F$ that are obtained from binary restrictions of $F$ through lifting the polymatroid inequalities. 
In Section~\ref{sec:computation} we present computational experiments performed for testing the effectiveness of the proposed inequalities in solving mean-risk optimization problems with on-off constraints. We conclude with a few final remarks in \autoref{sec:conclusion}.

\section{Preliminaries}	
\label{sec:prelim}

\subsection{Polymatroid inequalities}
\label{sec:prelim:polymatroid}

Given $\sigma \ge 0$ and $a_i > 0, \ i \in N$,  consider the set
\begin{align}
K_\sigma = \bigg \{ (x,z) \in \{0,1\}^N \times \R_+:  \sqrt{\sigma + \sum_{i \in N} a_i x_i } \le z  \bigg \}.
\end{align}
Observe that $K_0$ is the binary restriction of $F$ obtained by setting $y = x$.
For a given permutation $\left((1),(2),\ldots,(n)\right)$ of $N$, let
\begin{align}
\sigma_{(k)}&= a_{(k)} + \sigma_{(k-1)}, \text{ and } \sigma_{(0)} = \sigma,\notag\\
\pi_{(k)}&=\sqrt{\sigma_{(k)}}-\sqrt{\sigma_{(k-1)}},\label{eq:definitionPi}
\end{align}
and define the \emph{polymatroid inequality} as
\begin{equation}
\label{eq:extendedPolymatroidInequality}
\sum_{i=1}^n \pi_{(i)}x_{(i)}\leq z - \sqrt{\sigma}.
\end{equation}
Let $\Pi_\sigma$ be the set of such coefficient vectors $\pi$ for \textit{all} permutations of $N$.
The set function defining $K_\sigma$ is non-decreasing submodular; therefore, $\Pi_\sigma$ form the extreme points of a polymatroid \cite{edmonds:70} and
 the convex hull of $K_\sigma$ is given by
the set of all polymatroid inequalities \cite{L:submodular-convex}.

\begin{prop}[Convex hull of $K_\sigma$]
	\label{prop:convexHullK}
	$$\text{conv}(K_\sigma)=\left\{(x,z)\in [0,1]^N\times \R_+:\pi'x \leq z - \sqrt{\sigma}, \;\; \forall \pi\in \Pi_\sigma \right\}.$$
\end{prop}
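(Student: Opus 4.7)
The plan is to recognize the inequalities in \eqref{eq:extendedPolymatroidInequality} as the standard extended-polymatroid inequalities for the set function $g:2^N\to\R$ defined by $g(S):=\sqrt{\sigma+a(S)}-\sqrt{\sigma}$, reducing the statement to a classical result on submodular functions. First, I would verify that $g$ is nondecreasing with $g(\emptyset)=0$ and submodular. Monotonicity and normalization are immediate; submodularity follows from concavity of $t\mapsto\sqrt{t}$, since for $U\subseteq N$ and $i\notin U$ the marginal $g(U\cup\{i\})-g(U)=\sqrt{\sigma+a(U)+a_i}-\sqrt{\sigma+a(U)}$ is a decreasing function of $a(U)$. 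Edmonds' greedy characterization of the extended polymatroid $EP_g:=\{\pi\in\R^N_+:\pi(U)\leq g(U),\ \forall U\subseteq N\}$ then identifies its extreme points as exactly the vectors in $\Pi_\sigma$: for permutation $((1),\ldots,(n))$, the greedy vertex has $\pi_{(k)}=g(S^k)-g(S^{k-1})=\sqrt{\sigma_{(k)}}-\sqrt{\sigma_{(k-1)}}$ with $S^k:=\{(1),\ldots,(k)\}$.

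Validity of \eqref{eq:extendedPolymatroidInequality} over $K_\sigma$ is immediate from this identification: for $(x,z)\in K_\sigma$ and $S:=\{i:x_i=1\}$, membership of $\pi$ in $EP_g$ gives $\pi'x=\pi(S)\leq g(S)=\sqrt{\sigma+a(S)}-\sqrt{\sigma}\leq z-\sqrt{\sigma}$. Together with $\0\leq x\leq\1$ and $z\geq 0$, this shows that $\conv(K_\sigma)$ is contained in the polyhedron on the right-hand side.

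For the reverse inclusion I would take any $(\bar x,\bar z)$ in the right-hand side and exhibit it as a convex combination of points in $K_\sigma$. Reorder so that $\bar x_{(1)}\geq\bar x_{(2)}\geq\cdots\geq\bar x_{(n)}$, set $\bar x_{(n+1)}:=0$, and define $S^0:=\emptyset$ and $S^k:=\{(1),\ldots,(k)\}$ with weights $\lambda_0:=1-\bar x_{(1)}$ and $\lambda_k:=\bar x_{(k)}-\bar x_{(k+1)}$ for $1\leq k\leq n$. Then $\lambda_k\geq 0$ with $\sum_k\lambda_k=1$, and Abel summation gives both $\bar x=\sum_k\lambda_k\1_{S^k}$ and $\sum_k\lambda_k g(S^k)=\pi'\bar x$ for the greedy $\pi\in\Pi_\sigma$ associated with this permutation. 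Since $(\bar x,\bar z)$ satisfies the polymatroid inequality generated by this particular $\pi$, we obtain $\bar z-\sqrt{\sigma}\geq\pi'\bar x=\sum_k\lambda_k(\sqrt{\sigma+a(S^k)}-\sqrt{\sigma})$, hence $\bar z\geq\sum_k\lambda_k\sqrt{\sigma+a(S^k)}$. Choosing any $z_k\geq\sqrt{\sigma+a(S^k)}$ with $\sum_k\lambda_k z_k=\bar z$ then expresses $(\bar x,\bar z)=\sum_k\lambda_k(\1_{S^k},z_k)$ as a convex combination of points in $K_\sigma$.

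The main obstacle is this reverse inclusion, and more specifically the identification of the support function $\max_{\pi\in\Pi_\sigma}\pi'\bar x$ with the chain-decomposition value $\sum_k\lambda_k g(S^k)$; this is the content of Edmonds' greedy theorem for submodular functions, and once it is invoked the rest of the argument reduces to bookkeeping with the weights $\lambda_k$ and a one-dimensional splitting of the slack in $\bar z$.
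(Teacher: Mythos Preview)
Your argument is correct. The paper does not prove Proposition~\ref{prop:convexHullK}; it is quoted from \cite{atamturk.narayanan:08} as background, so there is no in-paper proof to compare against. Your approach---recognizing $g(S)=\sqrt{\sigma+a(S)}-\sqrt{\sigma}$ as a normalized nondecreasing submodular function, identifying $\Pi_\sigma$ with the greedy vertices of the associated polymatroid base, and then using the chain decomposition of $\bar x$ together with Edmonds' greedy theorem to certify the reverse inclusion---is exactly the standard route and matches the argument in the cited source. The only cosmetic point is that what you call ``Edmonds' greedy theorem'' for $\max_{\pi\in\Pi_\sigma}\pi'\bar x$ is equivalently the Lov\'asz extension evaluation $\hat g(\bar x)=\sum_k\lambda_k g(S^k)$; either phrasing is fine.
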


As shown by Edmonds  \cite{edmonds:70}, the maximization of a linear function over a polymatroid can be solved by the greedy algorithm; therefore, a point 
$(\bar x, \bar z) \in [0,1]^N \times \R_+$ can be separated from $\text{conv}(K_\sigma)$ via the greedy algorithm by sorting $\bar x_i, \ i \in N$ in non-increasing order in $O(n \log n)$ time.

\begin{prop}[Separation]
	\label{prop:separation}
	A point $(\bar{x},\bar{z})\not \in \text{conv}(K_\sigma)$ such that $\bar{x}_{(1)}\geq \bar{x}_{(2)} \geq \ldots \geq \bar{x}_{(n)}$ is separated from $\conv(K_\sigma)$ by inequality \eqref{eq:extendedPolymatroidInequality}.
\end{prop}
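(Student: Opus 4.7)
The plan is to invoke \autoref{prop:convexHullK} together with Edmonds' greedy theorem for polymatroids. \autoref{prop:convexHullK} tells us that $(\bar x,\bar z)\notin\conv(K_\sigma)$ must be witnessed either by a box constraint violation or, more substantively, by some $\pi\in\Pi_\sigma$ with $\pi'\bar x>\bar z-\sqrt{\sigma}$. Since the box constraints are already implicit in the separation setting ($\bar x\in[0,1]^N$, $\bar z\in\R_+$ being the standard context of a separation routine), the real task is to identify a single concrete $\pi\in\Pi_\sigma$ that certifies the violation. The candidate is the $\pi$ associated with the permutation that sorts $\bar x$ in non-increasing order.

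First, I would set up the underlying submodular function $f(S)=\sqrt{\sigma+a(S)}-\sqrt{\sigma}$ for $S\subseteq N$. Because $a_i>0$ and $\sqrt{\cdot}$ is concave, $f$ is non-decreasing and submodular, and its base polytope has extreme points given exactly by the greedy vectors $\pi$ in \eqref{eq:definitionPi}, ranging over all permutations of $N$. Thus $\Pi_\sigma$ is the set of vertices of the polymatroid $P(f)=\{\pi\in\R^N_+:\pi(S)\leq f(S),\ \forall S\subseteq N\}$.

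The second step is Edmonds' greedy theorem: for any $\bar x\in\R^N_+$, the linear program $\max\{\pi'\bar x:\pi\in P(f)\}$ is solved by the greedy construction applied in the order of non-increasing $\bar x_i$, and the optimizer is the corresponding extreme point $\pi\in\Pi_\sigma$. Since the LP attains its maximum at a vertex, the specific $\pi$ coming from the sorted order of $\bar x$ satisfies $\pi'\bar x=\max_{\pi'\in\Pi_\sigma}(\pi')'\bar x$.

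Finally, I would chain these facts together. Because $(\bar x,\bar z)\notin\conv(K_\sigma)$, \autoref{prop:convexHullK} produces some $\pi^*\in\Pi_\sigma$ with ${\pi^*}'\bar x-\bar z+\sqrt{\sigma}>0$. By the greedy optimality above, the $\pi$ associated with the non-increasing ordering of $\bar x$ satisfies $\pi'\bar x\geq{\pi^*}'\bar x$, so $\pi'\bar x-\bar z+\sqrt{\sigma}>0$ as well. Hence inequality \eqref{eq:extendedPolymatroidInequality} for this specific permutation is violated by $(\bar x,\bar z)$, establishing the claim. I do not anticipate a real obstacle: the only delicate point is being explicit that Edmonds' greedy theorem applies in the form needed (maximizing over the vertex set $\Pi_\sigma$ rather than over $P(f)$), which follows immediately from LP vertex optimality for non-negative $\bar x$.
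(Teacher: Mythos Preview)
Your argument is correct and matches the paper's own justification: the paper does not give a formal proof of this proposition but simply observes (in the paragraph preceding it) that the defining set function is non-decreasing submodular, that $\Pi_\sigma$ are the extreme points of the associated polymatroid, and that Edmonds' greedy algorithm therefore produces the most violated inequality by sorting $\bar x$ in non-increasing order. Your write-up is a more detailed version of exactly this reasoning.
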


Atamt\"urk and Narayanan \cite{atamturk.narayanan:08} consider the  mixed-integer version of $K_\sigma$:
\begin{equation*}
L_\sigma=\left\{(x,y,z)\in \{0,1\}^N\times [0,1]^M \times \R_+: {\sqrt{\sigma + \sum_{i\in N}a_ix_i+\sum_{i\in M}c_iy_i^2}}\leq z\right\},
\end{equation*}
where $c_i > 0, \ i \in M$  and give valid inequalities for $L_\sigma$ based on 
the polymatroid inequalities.
Without loss of generality, the upper bounds of the continuous variables in $L_\sigma$ are set to one by scaling.
\begin{prop}[Valid inequalities for $L_\sigma$]
	\label{prop:validIneqBounded1}
	For $T\subseteq M$ inequalities
	\begin{equation}
	\label{eq:polymatroidBoundedDominated}
    \pi'x +	\sqrt{\sigma + \sum_{i\in T}c_iy_i^2} \leq z, \quad \pi\in \Pi_{\sigma + c(T)}
	\end{equation}
	are valid for $L_\sigma$.
\end{prop}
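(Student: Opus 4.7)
The plan is to reduce validity of the lifted inequality \eqref{eq:polymatroidBoundedDominated} to the pure-binary case covered by \autoref{prop:convexHullK}, by absorbing the continuous piece into the ``constant'' shift $\sigma$.

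Fix $(x,y,z) \in L_\sigma$ and a subset $T \subseteq M$. The first step is to set
\[
\sigma' \;:=\; \sigma + \sum_{i \in T} c_i y_i^2 ,
\]
and observe two things. First, since $c_i y_i^2 \ge 0$ for $i \in M \setminus T$, the defining conic constraint of $L_\sigma$ implies
\[
\sqrt{\sigma' + \sum_{i \in N} a_i x_i} \;\le\; z,
\]
so $(x,z) \in K_{\sigma'}$. Second, since $y_i \in [0,1]$, we have $c_i y_i^2 \le c_i$, hence $\sigma' \le \sigma + c(T)$.

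Next I would apply \autoref{prop:convexHullK} at the shift $\sigma'$. For any permutation $((1),\ldots,(n))$ of $N$, denote by $\pi(\tau)$ the coefficient vector defined by \eqref{eq:definitionPi} with starting value $\tau$. \autoref{prop:convexHullK} (applied to the same permutation) gives
\[
\pi(\sigma')' x \;\le\; z - \sqrt{\sigma'} .
\]
Thus it suffices to prove the coefficient-wise comparison $\pi(\sigma + c(T))_{(k)} \le \pi(\sigma')_{(k)}$ for every $k$; combined with $x \ge 0$ this yields
\[
\pi(\sigma + c(T))' x \;\le\; \pi(\sigma')' x \;\le\; z - \sqrt{\sigma'} ,
\]
which, after moving $\sqrt{\sigma'}$ to the left, is exactly \eqref{eq:polymatroidBoundedDominated}.

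The heart of the argument — and the only nontrivial step — is therefore the monotonicity claim: for any fixed increment $a > 0$, the map $\tau \mapsto \sqrt{\tau + a} - \sqrt{\tau}$ is nonincreasing on $[0,\infty)$. This is immediate from the concavity of $\sqrt{\cdot}$ (or by differentiating). Applied inductively along the permutation with $\tau = \sigma_{(k-1)}$ for $\sigma_0 \in \{\sigma',\,\sigma+c(T)\}$ and using $\sigma' \le \sigma + c(T)$, it gives $\pi(\sigma+c(T))_{(k)} \le \pi(\sigma')_{(k)}$ for every $k$, completing the proof. No serious obstacle is expected; the only thing to handle carefully is keeping the permutation fixed throughout so that the two polymatroid coefficient vectors are genuinely comparable termwise.
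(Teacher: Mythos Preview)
Your argument is correct. The reduction to $K_{\sigma'}$ with $\sigma' = \sigma + \sum_{i\in T} c_i y_i^2$, followed by the componentwise comparison $\pi(\sigma+c(T))_{(k)} \le \pi(\sigma')_{(k)}$ via the monotonicity of $\tau \mapsto \sqrt{\tau + A_k} - \sqrt{\tau + A_{k-1}}$, goes through exactly as you describe; the only point to note is that you invoke \autoref{prop:convexHullK} merely for the \emph{validity} direction (each $\pi' \in \Pi_{\sigma'}$ gives a valid inequality for $K_{\sigma'}$), which is the easy part of that proposition.

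Note, however, that the present paper does not supply its own proof of \autoref{prop:validIneqBounded1}: it is stated in \autoref{sec:prelim:polymatroid} as a result recalled from \cite{atamturk.narayanan:08}, so there is no in-paper argument to compare yours against. Your proof is a clean self-contained justification consistent with the polymatroid machinery the paper relies on.
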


Inequalities \eqref{eq:polymatroidBoundedDominated} \rev{are} used \rev{to} derive nonlinear valid inequalities for $F$ in Section~\ref{sec:cuts}.

\subsection{Optimization}
\label{sec:prelim:solve}

In this section, we consider the optimization problem
\begin{align*}
(\cP) \ \ \ \ \
\min \bigg  \{ c'x + d'y + \sqrt{\sigma + \sum_{i \in N} a_i y_i^2} :  \0 \le y \le x, \ x \in \{0,1\}^N, \ y \in \R_+^N \bigg \},
\end{align*}
which will be useful in proving the validity of the inequalities for $F$.
We characterize the optimal solutions and give a polynomial algorithm for (\cP).
We assume that $\sigma \ge 0, \ a_i > 0, \ i \in N$ to ensure a real-valued objective. 
Without loss of generality, we assume that 
$c_i > 0, \ i \in N$, otherwise, we may set $x_i$ to one;
$d_i < 0, \ i \in N$, otherwise, we may set $y_i$ to zero;  and
$c_i + d_i < 0, \ i \in N$, otherwise, we may set both $x_i$ and $y_i$ to zero.

Without loss of generality, assume that the variables are indexed so that
\[\frac{c_1 + d_1}{a_1} \leq  \frac{c_2 + d_2}{a_2} \leq \cdots \leq \frac{c_n+d_n}{a_n} \cdot \]

The following proposition shows that the binary part of an optimal solution to (\cP) is a vector of consecutive ones,
followed by consecutive zeroes.

\begin{prop} \label{prop:consec1}
If $(x^*, y^*)$ is an optimal solution to (\cP), then $x^*_k = 1$ for some $k \in N$, implies $x^*_i = 1$ for all $i \in [k-1]$.
\end{prop}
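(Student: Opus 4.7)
The plan is a proof by contradiction via an exchange argument swapping $k$ with $j$. Suppose $(x^*,y^*)$ is optimal for (OPT) yet there are indices $j<k$ with $x^*_j=0$ and $x^*_k=1$. First, I would observe that $y^*_k>0$: otherwise flipping $x^*_k$ to $0$ preserves feasibility and every other variable while reducing the objective by $c_k>0$, contradicting optimality. I then introduce the swap candidate $(\hat x,\hat y)$ defined by $\hat x_j=1$, $\hat x_k=0$, $\hat y_k=0$, $\hat y_i=y^*_i$ for $i\notin\{j,k\}$, and $\hat y_j\in[0,1]$ chosen to minimize the new objective.

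Set $A:=\sigma+\sum_{i\ne j,k}a_i(y^*_i)^2$. Since $y^*$ is optimal in $y$ at fixed $x^*$, the value $y^*_k$ minimizes the univariate convex function $t\mapsto c_k+d_kt+\sqrt{A+a_kt^2}$ on $[0,1]$. Consequently, after optimizing $\hat y_j$, the change in objective simplifies to
\[
\mathrm{obj}(\hat x,\hat y)-\mathrm{obj}(x^*,y^*)\;=\;V_j(A)-V_k(A),
\]
where $V_i(A):=\min_{t\in[0,1]}\bigl(c_i+d_it+\sqrt{A+a_it^2}\bigr)$. The univariate KKT conditions supply closed forms: $V_i(A)=c_i+\sqrt{A(1-d_i^2/a_i)}$ in the interior case (when $a_i>d_i^2$ and the critical point lies in $(0,1)$), and $V_i(A)=c_i+d_i+\sqrt{A+a_i}$ in the corner case at $t=1$. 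Establishing $V_j(A)\le V_k(A)$, strictly after pairing with secondary optimality conditions, then yields the contradiction.

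The ordering $(c_j+d_j)/a_j\le(c_k+d_k)/a_k$ drives the comparison of $V_j(A)$ and $V_k(A)$. I would exhaust the four interior/corner combinations: in the corner cases the comparison rearranges into a secant-type inequality in $(c_i+d_i,a_i)$ that follows from the ratio ordering together with the concavity of $\sqrt{A+\cdot}$; in the interior case I would pick $\hat y_j=y^*_k\sqrt{a_k/a_j}$ (valid whenever this is at most $1$) to align the square-root terms, reducing the inequality to the linear comparison $c_j+d_j\hat y_j\le c_k+d_ky^*_k$, which follows from the ratio ordering after invoking the interior KKT identity $y^*_k=|d_k|h^*/a_k$ with $h^*=\sqrt{A+a_k(y^*_k)^2}$. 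The main obstacle is this exhaustive case split and the handling of boundary equality: when a direct swap yields only $V_j(A)=V_k(A)$, I would augment the swap with a secondary perturbation (removing $k$, or adding $j$ in addition to the swap) and use $c_j,c_k>0$ together with the marginal optimality of $y^*_k>0$ to force a strict improvement, completing the contradiction.
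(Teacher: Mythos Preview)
Your swap strategy has a genuine gap: the inequality $V_j(A)\le V_k(A)$ does \emph{not} follow from the ratio ordering $(c_j+d_j)/a_j\le (c_k+d_k)/a_k$ alone, even in the corner--corner case you single out as easiest. Take $c_j=0.9,\ d_j=-4,\ a_j=10$ and $c_k=0.1,\ d_k=-5,\ a_k=16$ with $A=100$. All standing assumptions hold ($c_i>0$, $d_i<0$, $c_i+d_i<0$), the ratios satisfy $-0.31\le -0.306$, and both minimizers sit at $t=1$ since $d_i^2>a_i$. Yet
\[
V_j(100)=-3.1+\sqrt{110}\approx 7.39\;>\;V_k(100)=-4.9+\sqrt{116}\approx 5.87,
\]
so the swap strictly \emph{increases} the objective and your argument stalls---and not at an equality boundary where your ``secondary perturbation'' would kick in. Note that $V_k(100)<\sqrt{100}$, so having $k$ switched on is perfectly consistent with optimality; the contradiction in this configuration comes from \emph{adding} $j$ (objective drops to $-8+\sqrt{126}\approx 3.23$), not from swapping.

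What is missing is precisely the optimality information on $k$. The paper does not swap: it compares the point obtained by \emph{adding} $j$ at $y_j=1$ with the point obtained by \emph{removing} $k$. Optimality gives $\delta_1:=z^*-z''\le 0$ for the removal, and the key step is the single normalized inequality
\[
\frac{\delta_1}{a_k(y_k^*)^2}\;>\;\frac{\delta_2}{a_j},
\]
which follows directly from the ratio ordering (for the linear part, after using $0<y_k^*\le 1$) and the strict concavity of $\sqrt{\cdot}$ (for the square-root increments). Since the left side is $\le 0$, this forces $\delta_2<0$, i.e.\ adding $j$ strictly improves---contradiction. No case split on interior/corner is needed. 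If you want to repair your route, you must weave $V_k(A)\le \sqrt{A}$ into the comparison rather than rely on the ratio ordering alone; but at that point the paper's add-versus-remove comparison is both shorter and cleaner.
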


\begin{proof}
Suppose for contradiction that $x^*_k = 1$, but $x^*_j = 0$ for some $j < k$.
Consider two feasible points $(x',y')$ and $(x'', y'')$ 
with respective objective values $z'$ and $z''$, constructed as: 
\ignore{
\begin{alignat*}{3}
 x'_i &= \begin{cases} 1 & \ \text{if  } i = j \\ x^*_i & \text{otherwise} \end{cases} \quad
 y'_i &= \begin{cases} 1 & \ \text{if  } i = j \\ y^*_i & \text{otherwise} \end{cases}  \\
 x''_i &= \begin{cases} 0 & \ \text{if  } i = k \\ x^*_i & \text{otherwise} \end{cases} \quad
 y''_i &= \begin{cases} 0 & \ \text{if  } i = k \\ y^*_i & \text{otherwise.} \end{cases}
\end{alignat*}
}
\begin{align*}
(x', y') &= (x^*, y^*) + (e_j, e_j), \\
(x'', y'') &= (x^*, y^*) - (e_k, y^*_k e_k ).  
\end{align*}
We will show that $z' < z^*$, contradicting the optimality of $(x^*,y^*)$. To this end,
let $\xi := \sigma + \sum_{i \in N} a_i {y^*_i}^2$, and
\begin{alignat*}{3}
\delta_1 &:= z^* - z'' = c_k + d_k y^*_k+ \sqrt{\xi} - \sqrt{\xi - a_k {y^*_k}^2}, \\
\delta_2 &:= z' - z^* = c_j + d_j +  \sqrt{\xi+a_j} - \sqrt{\xi}.
\end{alignat*}
As $(x'', y'')$ is a feasible solution, $\delta_1 \leq 0$. Also note that $y_k^* > 0$
as otherwise $x_k^*$ would be zero in an optimal solution since $c_k > 0$.
Now, we establish that 
\begin{align*}
\frac{\delta_1}{a_k {y^*_k}^2} - \frac{\delta_2}{a_j}
= \left( \frac{c_k + d_k y^*_k}{a_k {y^*_k}^2} - \frac{c_j + d_j}{a_j}\right)
   + \left( \frac{\sqrt{\xi} - \sqrt{\xi-a_k{y^*_k}^2}}{a_k{y^*_k}^2} - \frac{\sqrt{\xi+a_j} - \sqrt{\xi}}{a_j} \right) > 0,
\end{align*}
from the inequality
\[\frac{c_k + d_k y^*_k}{a_k {y^*_k}^2}  \geq \frac{c_k + d_k}{a_k y^*_k} \geq \frac{c_j + d_j}{a_j}, \]
which holds by the indexing assumption and that $0 < y^*_k \le 1$, and from the inequality
 \[\frac{\sqrt{\xi} - \sqrt{\xi-a_k{y^*_k}^2}}{a_k{y^*_k}^2} - \frac{\sqrt{\xi+a_j} - \sqrt{\xi}}{a_j} > 0,\]
which follows from the strict concavity of square root function. Therefore, we have
$\frac{\delta_2}{a_j} < \frac{\delta_1}{a_k {y^*_k}^2} \leq 0$, implying $\delta_2 < 0$, which contradicts the optimality of $(x^*,y^*)$.
\end{proof}

\begin{prop}
	\label{prop:opt-compexity}
	There is an $O(n^2)$ time algorithm to solve (\cP). 
\end{prop}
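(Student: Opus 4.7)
The plan is to combine \autoref{prop:consec1} with an efficient KKT-based solution of the continuous convex subproblem left after fixing the binary support. By \autoref{prop:consec1}, every optimal $x^{*}$ to (\cP) equals $\sum_{i=1}^k e_i$ for some $k \in \{0, 1, \ldots, n\}$, so (\cP) reduces to the one-dimensional enumeration $\min_{k} \phi(k)$, where
\[ \phi(k) := \sum_{i=1}^k c_i + \min \left\{ \sum_{i=1}^k d_i y_i + \sqrt{\sigma + \sum_{i=1}^k a_i y_i^2} \ : \ 0 \le y \le \1 \right\}. \]

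For fixed $k$, I would analyze the convex subproblem defining $\phi(k)$ via its KKT conditions. Writing $t = \sqrt{\sigma + \sum_{i=1}^k a_i y_i^2}$ at an optimum, stationarity gives $d_i + a_i y_i / t = \alpha_i - \beta_i$, where $\alpha_i, \beta_i \ge 0$ are the multipliers of $y_i \ge 0$ and $y_i \le 1$. Since $d_i < 0$, complementary slackness rules out $y_i = 0$, so each $y_i$ is either pinned at the upper bound $1$ or takes the interior value $-d_i t / a_i$; the former occurs iff $t \ge \tau_i := -a_i/d_i$. Sorting the indices in $[k]$ by $\tau_i$ in increasing order, the candidate tight sets $B = \{i : y_i = 1\}$ are exactly the prefixes of this order, of which there are only $k+1$. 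For each prefix, substituting the KKT values into $t^2 = \sigma + \sum_{i} a_i y_i^2$ yields
\[ t^2 \bigg( 1 - \sum_{i \in [k]\setminus B} \frac{d_i^2}{a_i} \bigg) = \sigma + a(B), \]
from which $t$ is recovered in closed form (when the bracket is positive), and consistency of $B$ with the resulting $t$ is verifiable in $O(1)$ provided running sums of $a_i$ and $d_i^2/a_i$ are maintained as $B$ grows.

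Putting this together, an upfront $O(n \log n)$ sort of all indices by $\tau_i$ is enough—its restriction to $[k]$ remains sorted—so each inner solve takes $O(n)$ time; combined with the outer enumeration over $k \in \{0, 1, \ldots, n\}$ this delivers the claimed $O(n^2)$ bound. The main technical obstacle is the KKT analysis that reduces the continuous subproblem to a linear scan over sorted prefixes; once that reduction is in hand, the complexity bookkeeping is routine.
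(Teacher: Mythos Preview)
Your proposal is correct and follows essentially the same approach as the paper: invoke \autoref{prop:consec1} to reduce to the $n{+}1$ prefix candidates for $x^*$, then solve each resulting convex subproblem in $y$ in $O(n)$ via the KKT structure (which the paper packages as \autoref{alg:KKT} in the Appendix, sorting by $\tilde c_i/a_i = d_i/a_i$ and scanning prefixes exactly as you describe). The only difference is presentational---you spell out the KKT prefix characterization inline, whereas the paper's proof simply cites the appendix algorithm.
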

\begin{proof}
\autoref{prop:consec1} implies that 
there exist only $n+1$ possible candidates for optimal $x$, i.e., $\mathbf{0}$ and $ \sum_{i=1}^{k} e_i$ for $k \in N$.
After a single sort of the indices  in $O(n \ log  \ n)$ time,  
for each candidate $x$ the resulting convex optimization problem in $y$ can be solved in $O(n)$ time
with \autoref{alg:KKT} in the Appendix . Therefore, an optimal solution to (\cP) can be found in $O(n^2)$ time.
\end{proof}

\section{Lifted Polymatroid Inequalities}
\label{sec:cuts}

In this section, we derive three classes of valid inequalities for $F$ by lifting 
the polymatroid inequalities \eqref{eq:extendedPolymatroidInequality} described in Section~\ref{sec:prelim:polymatroid} from
specific restrictions of the feasible set $F$.  The first class of inequalities are linear, whereas the other two are nonlinear convex inequalities.

\subsection{Lifted Linear Polymatroid Inequalities }

\label{sec:lift1}

Consider the restriction of $F$ obtained by setting the continuous variables $y$ to their binary upper bounds $x$. It 
follows from Section~\ref{sec:prelim:polymatroid} that for any permutation ((1),(2), \ldots, (n)) of $N$, 
the polymatroid inequality
\begin{align}
\pi' x \leq z  - \sqrt{\sigma}\label{ineq:EP}
\end{align}
with  $\pi_{(i)} = \sqrt{\sigma_{(i)}} - \sqrt{\sigma_{(i-1)}},  \ i=1,2,\ldots,n$,
is valid for the restriction with $y=x$, but not necessarily for $F$.
In this section, we lift inequality \eqref{ineq:EP} to obtain the linear valid inequality
\begin{align} \label{ineq:EPlift}
\pi' x \leq z + \alpha' (x-y) - \sqrt{\sigma}, 
\end{align}
for $F$ with coefficients $\alpha_{(i)} = {a_{(i)}}/{\sqrt{\sigma_{(i)}}}, \ i=1,2,\ldots,n$.

\begin{prop}
\label{prop:validity} 
Inequality \eqref{ineq:EPlift} with $\alpha$ and $\pi$ defined as above is valid for $F$.
\end{prop}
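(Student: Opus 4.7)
The plan is to reduce the validity of \eqref{ineq:EPlift} to a statement about the optimal value of $(\cP)$ analyzed in Section~\ref{sec:prelim:solve}. Since any $(x,y,z)\in F$ satisfies $z \geq \sqrt{\sigma + \sum_{i \in N} a_i y_i^2}$, rearranging \eqref{ineq:EPlift} shows that it suffices to prove
$$(\alpha - \pi)'x - \alpha' y + \sqrt{\sigma + \sum_{i \in N} a_i y_i^2} \ \geq \ \sqrt{\sigma}$$
for every $x\in\{0,1\}^N$ and $\0 \leq y \leq x$. The left-hand side is precisely the objective of $(\cP)$ with $c_i := \alpha_i - \pi_i$ and $d_i := -\alpha_i$, so validity of \eqref{ineq:EPlift} is equivalent to showing that this instance of $(\cP)$ has optimal value at least $\sqrt{\sigma}$.

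Before invoking the structural result \autoref{prop:consec1}, I would verify the sign conventions of $(\cP)$ for this $(c,d)$: clearly $d_i < 0$ and $c_i + d_i = -\pi_i < 0$, while a short telescoping shows $c_{(i)} = \sqrt{\sigma_{(i-1)}}\,\pi_{(i)}/\sqrt{\sigma_{(i)}} \geq 0$. I would also check that the polymatroid's permutation $(1),(2),\ldots,(n)$ is already the ordering required by \autoref{prop:consec1}, since $(c_{(i)} + d_{(i)})/a_{(i)} = -1/(\sqrt{\sigma_{(i)}} + \sqrt{\sigma_{(i-1)}})$ is nondecreasing in $i$.

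By \autoref{prop:consec1}, any optimal $x$ for this instance of $(\cP)$ has the form $x = \sum_{i=1}^{k} e_{(i)}$ for some $k \in \{0,1,\ldots,n\}$. The case $k = 0$ gives objective $\sqrt{\sigma}$ trivially. For $k \geq 1$, evaluating the objective at the feasible point $y_{(i)} = 1$ for $i \leq k$ telescopes to $-\sum_{i \leq k}\pi_{(i)} + \sqrt{\sigma_{(k)}} = -(\sqrt{\sigma_{(k)}} - \sqrt{\sigma}) + \sqrt{\sigma_{(k)}} = \sqrt{\sigma}$, matching the target bound exactly. It then remains to argue that this value is in fact the minimum of the resulting convex program in $y \in [0,1]^k$; since the objective is convex in $y$, I would verify the KKT condition at $y = \1$ by computing the partial derivative $a_{(i)}(1/\sqrt{\sigma_{(k)}} - 1/\sqrt{\sigma_{(i)}}) \leq 0$ for each $i \leq k$, which certifies that the upper bound $y_{(i)} \leq 1$ is active at the optimum and hence $y = \1$ is a global minimizer.

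The main obstacle I anticipate is this final KKT/convexity step: confirming that the nonpositive partial derivatives at $y=\1$ genuinely certify a global minimum over the whole box $[0,1]^k$ (via convexity of the composition $\sqrt{\sigma + \sum a_i y_i^2}$), and cleanly handling the boundary case $\sigma = 0$ where $\sigma_{(0)}$ vanishes and $c_{(1)}$ collapses to zero in the indexing assumption of $(\cP)$. Everything else is either a direct invocation of \autoref{prop:consec1} or routine algebra.
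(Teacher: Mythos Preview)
Your proposal is correct and follows essentially the same approach as the paper: reformulate validity as an instance of $(\cP)$, verify that the permutation $((1),\ldots,(n))$ already satisfies the ordering hypothesis of \autoref{prop:consec1}, restrict to $x=\sum_{i\le k}e_{(i)}$, and certify $y=\1$ is optimal for the inner convex program via KKT. The paper phrases it as a maximization with target $\zeta\le 0$ and writes out the multipliers $\mu_{(i)}=a_{(i)}/\sqrt{\sigma_{(i)}}-a_{(i)}/\sqrt{\sigma_{(k)}}\ge 0$ explicitly, which is exactly your nonpositive-gradient computation in dual form; your concern about the $\sigma=0$ boundary (where $c_{(1)}=0$) is harmless since the reduction in Section~\ref{sec:prelim:solve} simply fixes $x_{(1)}=1$ in that case, consistent with \autoref{prop:consec1}.
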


\begin{proof}
Consider the optimization problem over $F$:
\begin{align*}
\zeta = \max \ &  \pi' x - \alpha' (x-y) - z + \sqrt{\sigma} \\
  \text{s.t. } & \sigma + \sum_{i \in N} a_i y_i^2 \le z^2 \\
                              & \0 \leq y \leq x \\
                              & x \in \{0,1\}^N, \ y \in \R_+^N, \ z \in \R_+.
\end{align*}
Inequality \eqref{ineq:EPlift} is valid for $F$ iff $\zeta \le 0$.
By plugging in the values for $\pi, \ \alpha$ and eliminating $z$, the problem is equivalently written as
\begin{center}
\begin{minipage}{0.9\textwidth}
\leqnomode
\begin{align*}
\label{eq:validity} 
\zeta = \max \ &  \sum_{i \in [n]}\bigg (\sqrt{\sigma_{(i)}} - \sqrt{\sigma_{(i-1)}} -\frac{a_{(i)}}{\sqrt{\sigma_{(i)}}} \bigg ) x_{(i)} \\
\tag{$V$} \ & + \sum_{i \in [n]} \frac{a_{(i)}}{\sqrt{\sigma_{(i)}}} y_{(i)} - \sqrt{\sigma+ \sum_{i \in [n]}a_{(i)} y_{(i)}^2} + \sqrt{\sigma} \\
 \text{s.t. } &    \0 \leq y \leq x \\
 & x \in \{0,1\}^N, \ y \in \R_+^N.
\end{align*}
\reqnomode
\end{minipage}
\end{center}
Note that \eqref{eq:validity} is a special case of (\cP) with coefficients
\begin{align*}
c_{(i)} = - \bigg (\sqrt{\sigma_{(i)}} - \sqrt{\sigma_{(i-1)}} -\frac{a_{(i)}}{\sqrt{\sigma{(i)}}} \bigg ), \text { and } 
d_{(i)} =  -\frac{a_{(i)}}{\sqrt{\sigma{(i)}}}, \ i \in [n].
\end{align*}
Then
\[
 \frac{c_{(i)} + d_{(i)}}{a_{(i)}} = - \frac{\sqrt{\sigma_{(i)}} - \sqrt{\sigma_{(i-1)}} }{a_{(i)}}, \ i \in [n],
\]
\noindent
and, \rev{by concavity of the square root function,} we have
\begin{align*}
\frac{c_{(i)}+d_{(i)}}{a_{(i)}} \leq \frac{c_{(j)}+d_{(j)}}{a_{(j)}},  \ \text{ for } i \leq j.
\end{align*}
By \autoref{prop:consec1}, there exists an optimal solution $(x^*, y^*)$ to ($V$) such that  $x^* = \sum_{i=1}^{m} e_{(i)}$ for some $m \in [n]$. Then, $y^*$ is an optimal solution to the following convex problem:

\begin{center}
\begin{minipage}{0.99\textwidth}
\leqnomode
\begin{align*}
\begin{split}
\max \ &  \sum_{i \in [m]} (\sqrt{\sigma_{(i)}} - \sqrt{\sigma_{(i-1)}}) - 
		\sum_{i \in [m]} \frac{a_{(i)}}{\sqrt{\sigma_{(i)}}} (1 - y_{(i)}) - \sqrt{\sigma + \sum_{i \in [m]} a_{(i)} y_{(i)}^2} + \sqrt{\sigma} \\
 \text{s.t. } & \0 \leq y \leq \1. 
\end{split}
\end{align*}
\reqnomode
\end{minipage}
\end{center}

\noindent
\rev{This convex optimization problem over the continuous variables $y$ is a special case of (COPT), considered in the Appendix,} and its KKT conditions \rev{(following from  \eqref{FOC1}--\eqref{CS})} are satisfied by $({y}, {\lambda}, {\mu})$ such that
\begin{align*}
{y}_i& = 1, \  i \in [m], \\
{\lambda}_i &= 0, \  i \in [m], \\
{\mu}_i &= \frac{a_{(i)}}{\sqrt{\sigma_{(i)}}} - \frac{a_{(i)}}{\sqrt{\sigma_{(m)}}} \geq 0, \  i \in [m].
\end{align*}
Therefore, there exists $(x^*, y^*) = (\sum_{i \in [m]} e_i, \sum_{i \in [m]} e_i)$ for some $m \in [n]$ with a binary $y^*$, implying $\zeta = 0$, i.e., the validity of \eqref{ineq:EPlift}.
\end{proof}

\begin{remark} Observe that
the proof of \autoref{prop:validity} implies that inequality \eqref{ineq:EPlift} is tight for the following $n+1$ 
affinely independent points of $F$:
\begin{align*}
(x,y,z) &= (\mathbf{0}, \mathbf{0}, \sqrt{\sigma}); \\
(x,y,z) &= \Big(\sum_{k \leq i}e_{(k)}, \sum_{k \leq i}e_{(k)}, \sqrt{\sigma_{(i)}} \Big), \ \ i \in [n]. \\
\end{align*}
\end{remark}

\ignore{, $c = [8, 5, 20, 11, 12]$, and $d = [-12, -6, -22, -12, -14]$.}
\ignore{which has the optimal solution
	\begin{align*}
	x^* = y^* =  \begin{bmatrix}
	1, 0, 1, 0, 1
	\end{bmatrix},
	z^* = -0.2540.
	\end{align*}
}

\begin{example}
\label{ex:EPlift}
Consider an instance of $F$ with $a = [22, 18, 21, 19, 17], \ \sigma = 0$ and
the following fractional point is contained in its continuous relaxation: 
\begin{align*}
\bar{x} = \bar{y} = \begin{bmatrix}
1, 0.3817, 0.6543, 0.3616, 0.8083
\end{bmatrix},
\bar{z} = 6.8705.
\end{align*}
For the permutation $(1,3,5,2,4)$, $\pi$ is computed as
\begin{align*}
\pi_{(1)} &= \pi_1 =  \sqrt{a_1} - \sqrt{0} = \sqrt{22} = 4.6904, \\
\pi_{(2)} &= \pi_3 =  \sqrt{a_1+a_3} - \sqrt{a_1} = \sqrt{43} - \sqrt{22} = 1.8670, \\
 & \vdots \\
\pi_{(5)} &= \pi_4 = \sqrt{a_1+ \cdots + a_5} - \sqrt{a_1 + a_2 + a_3 + a_5 } = \sqrt{97} - \sqrt{78} = 1.0171. 
\end{align*}
The lifting coefficients $\alpha$ are computed accordingly, and we get inequality \eqref{ineq:EPlift} with
\begin{align*}
\pi &= \begin{bmatrix}
4.6904, 1.0858, 1.8670, 1.0171, 1.1885
\end{bmatrix}, \\
\alpha &= \begin{bmatrix}
4.6904, 2.0381, 3.2025, 1.9292, 2.1947
\end{bmatrix}.
\end{align*}
The fractional point $(\bar{x}, \bar{y}, \bar{z})$ is cut off by  \eqref{ineq:EPlift}
as $\pi ' \bar{x} - \alpha ' (\bar{x} - \bar{y}) - \bar{z} = 0.7844 > 0 $.
\end{example}

Although inequalities \eqref{ineq:EPlift} cut off points of the continuous relaxation with fractional $x$, unlike for the binary case
$K_\sigma$, adding all $n!$ inequalities \eqref{ineq:EPlift} 
is not sufficient to describe \conv(F) as \conv(F) is not a polyhedral set.
Therefore, in the next two subsections we present two nonlinear convex generalizations of inequalities \eqref{ineq:EPlift}.

\subsection{Lifted Nonlinear Polymatroid Inequalities I}
\label{sec:lift2}

The second class of lifted inequalities is obtained by applying the procedure described in Section~\ref{sec:lift1} 
for a subset of the variables. For $S \subseteq N$, introducing an auxiliary variable $t \in \R_+$, let us rewrite 
the conic constraint $\sum_{i\in N} a_i y_i^2  \le z^2$ as 

\begin{center}
\begin{minipage}{0.8\textwidth}
\leqnomode
\begin{align*}
\begin{split}
 & {t^2 + \sum_{i\in N\setminus S} a_i y_i^2}  \le z^2, \\
                & \sigma + {\sum_{i \in S} a_i y_i^2} \le t^2. 
\end{split}
\end{align*}
\reqnomode
\end{minipage}
\end{center}

Applying \autoref{prop:validity} to the relaxation defined by constraints
$$\sigma + {\sum_{i \in S} a_i y_i^2} \le t^2, \ \0 \leq y_S \leq x_S,  \ x_S \in \{0,1\}^S, y_S \in \R_+^S, t \in \R_+ $$
for each permutation $((1), (2), \ldots, (|S|))$ of $S$,
we generate a lifted polymatroid inequality \eqref{ineq:EPlift} of the form
\begin{align*}
\pi_S ' x_S \leq t + \alpha_S ' (x_S - y_S) - \sqrt{\sigma}
\end{align*}
where ${\pi_S}_{(i)} = \sqrt{\sigma_{S(i)}} - \sqrt{\sigma_{S(i-1)}}$, ${\alpha_S}_{(i)} = a_{(i)} / \sqrt{\sigma_{S(i)}}$, 
and the partial sums are defined as
$\sigma_{S(i)} = a_{(i)} + \sigma_{S(i-1)} \text{ for } i=1,2,\ldots,|S|$ with $\sigma_{S(0)} = \sigma$.
Eliminating the auxiliary nonnegative variable $t$, we obtain the following class of valid inequalities for $F$.

\begin{prop}
	For $S \subseteq N$, the inequality 
	\begin{align}
	\label{ineq:EP_subset}
	\big ((\pi_S ' x_S + \sqrt{\sigma} - \alpha_S ' (x_S - y_S))^+ \big)^2 + \sum_{i\in N \setminus S} a_i y_i^2 \le z^2
	\end{align}
	with $\pi_S$ and $\alpha_S$ defined above is valid for $F$.
\end{prop}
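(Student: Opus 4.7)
The plan is to lift \autoref{prop:validity} from a sub-problem on $S$ back to $F$ by substituting for the auxiliary variable $t$ introduced in the decomposition of the conic constraint.

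I would fix any $(x, y, z) \in F$ and set $t := \sqrt{\sum_{i \in S} a_i y_i^2} \in \R_+$. Then the sub-conic inequality $\sum_{i\in S} a_i y_i^2 \le t^2$ holds with equality, and $t^2 + \sum_{i \in N \setminus S} a_i y_i^2 = \sum_{i \in N} a_i y_i^2 \le z^2$. Consequently, the triple $(x_S, y_S, t)$ lies in the sub-problem set
\[
\bigg\{(x_S, y_S, t): \sum_{i\in S} a_i y_i^2 \le t^2,\ \0 \le y_S \le x_S,\ x_S \in \{0,1\}^S,\ t \in \R_+\bigg\},
\]
which is itself of the same structural form as $F$ with $N$ replaced by $S$ and $z$ by $t$. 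Applying \autoref{prop:validity} to this sub-problem for the chosen permutation of $S$ yields the linear lifted polymatroid inequality $\pi_S' x_S - \alpha_S'(x_S - y_S) \le t$.

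The remaining step is to eliminate $t$ via squaring: once squaring is justified, I obtain $(\pi_S' x_S - \alpha_S'(x_S - y_S))^2 \le t^2 = \sum_{i\in S} a_i y_i^2$, and then adding $\sum_{i\in N\setminus S} a_i y_i^2$ to both sides and chaining through $\sum_{i\in N} a_i y_i^2 \le z^2$ delivers the desired inequality \eqref{ineq:EP_subset}.

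The main obstacle is the squaring step, which requires handling the sign of $A := \pi_S' x_S - \alpha_S'(x_S - y_S)$ carefully: the implication $A \le t \Rightarrow A^2 \le t^2$ is immediate when $A \ge 0$ but needs an extra argument when $A < 0$, since a direct coefficient computation gives $\pi_{(i)} - \alpha_{(i)} \le 0$ and hence $A$ is not visibly nonnegative in general. A natural way to close this gap is to exploit the equivalent tighter form $A \le \sqrt{z^2 - \sum_{i\in N\setminus S} a_i y_i^2}$, obtained by taking the largest admissible $t$ in the sub-problem, together with an extremal argument over $F$ analogous to the one used in the proof of \autoref{prop:validity}, verifying that the squared defect $(\pi_S' x_S - \alpha_S'(x_S - y_S))^2 + \sum_{i\in N\setminus S} a_i y_i^2 - z^2$ is maximized at zero.
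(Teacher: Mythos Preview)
Your decomposition and application of \autoref{prop:validity} are exactly the route the paper takes: the text preceding the proposition introduces the auxiliary variable $t$, applies the linear lifted inequality on the $S$-subproblem to get $\pi_S'x_S - \alpha_S'(x_S-y_S) \le t$, and then simply says ``eliminating the auxiliary variable $t$, we obtain'' \eqref{ineq:EP_subset}. No further argument is given, so your proposal reproduces the paper's reasoning faithfully and goes beyond it by isolating the squaring step.

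You are right that this is the only non-trivial point, and the gap you identify is genuine. From $A := \pi_S' x_S - \alpha_S'(x_S - y_S) \le t$ and $t^2 + \sum_{i \in N \setminus S} a_i y_i^2 \le z^2$ with $t \ge 0$, projecting out $t$ yields only
\[
A \ \le \ \sqrt{z^2 - \textstyle\sum_{i \in N \setminus S} a_i y_i^2},
\]
which is equivalent to $(\max\{A,0\})^2 + \sum_{i \in N \setminus S} a_i y_i^2 \le z^2$, not to \eqref{ineq:EP_subset} with $A^2$. Your proposed extremal argument (``verifying that the squared defect is maximized at zero'') cannot close this gap, because \eqref{ineq:EP_subset} as literally written fails on $F$. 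Take $N = S = \{1,2\}$, $a_1 = a_2 = 1$, permutation $(1,2)$, so $\pi_S = (1,\sqrt{2}-1)$ and $\alpha_S = (1,1/\sqrt{2})$; the point $(x,y,z)=\big((0,1),(0,0),0\big)\in F$ gives $A = \pi_2 - \alpha_2 = \sqrt{2}-1-1/\sqrt{2} \approx -0.293$, hence the left side of \eqref{ineq:EP_subset} equals $A^2 \approx 0.086 > 0 = z^2$.

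So the issue you spotted is real and is glossed over in the paper's own derivation. What your argument (and the paper's) actually establishes is the one-sided form above, which coincides with \eqref{ineq:EP_subset} on the region $A \ge 0$ --- in particular at every relaxation point where the cut could possibly be violated, so the distinction is immaterial for separation and for the gradient cuts \eqref{ineq:EP_subset_grad} used computationally.
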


Note that inequality \eqref{ineq:EP_subset} is convex as it can be represented as conic quadratic by re-introducing the auxiliary variable $t \ge 0$ .
It is equivalent to \eqref{ineq:EPlift} for $S = N$ and to the original constraint for $S = \emptyset$. 
Otherwise, it is distinct from both. It is differentiable at $(x,y,z)$ with
$\pi_S ' x_S + \sqrt{\sigma} < \alpha_S ' (x_S - y_S)$ or $\pi_S ' x_S + \sqrt{\sigma} > \alpha_S ' (x_S - y_S)$.

\begin{remark}
The following $n+1$ affinely independent points of $F$
satisfy inequality \eqref{ineq:EP_subset} at equality:
\begin{align*}
(x,y,z) &= (\mathbf{0}, \mathbf{0}, \sqrt{\sigma}); \\
(x,y,z) &= \Big(\sum_{k \le i }e_{(k)}, \sum_{k \le i }e_{(k)}, \sqrt{\sigma_{(i)}} \Big), \  i = 1, 2, \ldots, |S|; \\
(x,y,z) &= (e_i, e_i, \sqrt{\sigma + a_i}), \ i \in N \setminus S.
\end{align*}
\end{remark}

The following example illustrates a point satisfying inequality \eqref{ineq:EPlift}, but 
cut off by inequality \eqref{ineq:EP_subset}.

\begin{example}
\label{ex:EPsubset}
Consider the instance in \autoref{ex:EPlift}, and the fractional point
\begin{align*}
\bar{x} = \bar{y} = (1, 0, 0, 0, 0.8), \ \bar{z} = 5.7341.
\end{align*}
This point satisfies inequality \eqref{ineq:EPlift} generated in \autoref{ex:EPlift}. 
Now letting $S = \{ 1, 2, 5 \}$ and using the permutation (1,5,2), $\pi_S$ is computed as
\begin{align*}
{\pi_S}_{(1)} &= \pi_1 = \sqrt{a_1} - \sqrt{0} = \sqrt{22}  = 4.6904, \\
{\pi_S}_{(2)} &= \pi_5 = \sqrt{a_1 + a_5} - \sqrt{a_1} = \sqrt{39} - \sqrt{22}  = 1.5546, \\
{\pi_S}_{(3)} &= \pi_2 = \sqrt{a_1 + a_5 + a_2 } - \sqrt{ a_1 + a_5} = \sqrt{57} - \sqrt{39}  = 1.3048. 
\end{align*}
Consequently, we obtain inequality \eqref{ineq:EP_subset} with coefficients
\begin{align*}
\pi &= \begin{bmatrix}
4.6904, 1.3048, 0, 0, 1.5546
\end{bmatrix}, \\
\alpha &= \begin{bmatrix}
4.6904, 2.3842, 0, 0, 2.7222
\end{bmatrix}.
\end{align*}
Observe that the fractional point $(\bar{x}, \bar{y}, \bar{z})$ is cut off by inequality \eqref{ineq:EP_subset} as
\[ \sqrt{ \big ((\pi_S' \bar{x} + \sqrt{\sigma}- \alpha_S'(\bar{x} - \bar{y}))^+ \big)^2 + \sum_{i \in N \setminus S} a_i \bar{y}_i^2 } - \bar{z} = 0.2 > 0. \]
\end{example}

\subsection{Lifted Nonlinear Polymatroid Inequalities II}
\label{sec:lift3}

The third class of inequalities are derived from a partial restriction of $F$ by setting a subset of the continuous variables to their upper bound. For $S \subseteq N$ and $T \subseteq N \setminus S$, consider the restriction of $F$ with  $y_i = x_i, \ i \in S$:
\begin{align*}
    \ \ \            & t^2 + \sum_{i\in N \setminus (S \cup T)} a_i y_i^2 \le z^2 \\
        \ \ \            & \sigma + \sum_{i\in S} a_i x_i + \sum_{i\in T} a_i y_i^2 \le t^2 \\
\ignore{(F_S) \quad \quad   }             & y_i \leq x_i, \ i \in N \setminus S \\
                & x \in \{0,1\}^N, \ y \in \R_+^N, \ t \in \R_+.
\end{align*}

Applying the mixed-integer inequality \eqref{eq:polymatroidBoundedDominated} to the second constraint above, we obtain inequality
\begin{align*} 
\pi_S ' x_S + \sqrt{\sigma + \sum_{i \in T} a_i y_i^2} \leq t, 
\end{align*}
where ${\pi_S}_{(i)} = \sqrt{\sigma_{S(i)}} - \sqrt{\sigma_{S(i-1)}}$ 
and 
$\sigma_{S(i)} = a_{(i)} + \sigma_{S(i-1)} \text{ for } i=1,2,\ldots,|S|$ with $\sigma_{S(0)} = \sigma + a(T)$.
This inequality is valid for the restriction above, but not necessarily for $F$.
Next, we lift it and eliminate the auxiliary nonnegative variable $t$, to obtain
the third class of valid inequalities
\begin{align}
\label{ineq:EP_mixed}
\bigg (\big (\pi_{S} ' x_S + \sqrt{\sigma + \sum_{i \in T} a_i y_i^2}  - \alpha_S (x_S - y_S) \big )^+ \bigg )^2  + \sum_{i\in N \setminus (S \cup T)} a_i y_i^2 \leq z^2,
\end{align}
for $F$ with $\alpha_{(i)} = {a_{(i)}}/{\sqrt{\sigma_{(i)}}}, \ i=1,2,\ldots,|S|$.

\begin{prop}
Inequality \eqref{ineq:EP_mixed} with $\alpha_S$ and $\pi_S$ defined as above is valid for $F$.
\end{prop}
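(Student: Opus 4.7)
I would follow the template of the proof of \autoref{prop:validity}: rewrite validity of \eqref{ineq:EP_mixed} as $\zeta \le 0$ for an auxiliary maximization $\zeta$ over $F$, simplify using the structure of $F$, and then apply \autoref{prop:consec1} together with a first-order analysis. Treating the upper-bound direction of the conic inequality, validity reduces to
\[
\zeta := \max\bigg\{\pi_S'x_S + \sqrt{\sum_{i\in T} a_i y_i^2} - \alpha_S'(x_S - y_S) - \sqrt{z^2 - \sum_{i\in N\setminus (S\cup T)} a_i y_i^2}: (x,y,z) \in F\bigg\} \le 0.
\]

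At any maximizer, $z$ attains its lower bound $z^2 = \sum_{i\in N} a_i y_i^2$ because the objective is decreasing in $z$; consequently $y_{N\setminus(S\cup T)}$ and $x_{N\setminus S}$ drop out of the reduced objective. Setting $u := \sqrt{\sum_{i \in T} a_i y_i^2}$, which ranges in $[0,\sqrt{a(T)}]$ as $y_T$ ranges in $[0,\1_T]$, the problem becomes
\[
\zeta = \max\bigg\{\pi_S'x_S - \alpha_S'(x_S - y_S) + u - \sqrt{u^2 + \sum_{i\in S} a_iy_i^2}:\, x_S\in\{0,1\}^S,\,0\le y_S\le x_S,\,0\le u\le\sqrt{a(T)}\bigg\}.
\]
For each fixed $u$ this inner problem is an instance of (\cP) (up to the additive constant $u$) with $\sigma = u^2$, $c_{(i)}=\alpha_{S(i)}-\pi_{S(i)}$, and $d_{(i)}=-\alpha_{S(i)}$. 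A short computation using $a_{(i)}=\sigma_{S(i)}-\sigma_{S(i-1)}$ gives $(c_{(i)}+d_{(i)})/a_{(i)} = -1/(\sqrt{\sigma_{S(i)}}+\sqrt{\sigma_{S(i-1)}})$, which is nondecreasing in $i$ because $\sigma_{S(i)}$ is increasing; hence \autoref{prop:consec1} applies and some optimal $x_S^*$ equals $\sum_{k\le m}e_{(k)}$ for an $m\in\{0,1,\dots,|S|\}$.

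Fixing such an $x_S^*$, the residual problem in $(y_S,u)$ is concave, and I would verify via first-order conditions that the candidate $y_{(i)}^*=1$ for $i\le m$, $y_{(i)}^*=0$ for $i>m$, and $u^*=\sqrt{a(T)}$ is optimal: both $\partial/\partial y_{(i)}$ and $\partial/\partial u$ at this point are nonnegative because $\sigma_{S(i)}\le\sigma_{S(m)}$ and $\sigma_{S(m)}\ge a(T)$, so the gradient points outward at every binding upper bound. Direct substitution then collapses the objective to $(\sqrt{\sigma_{S(m)}}-\sqrt{a(T)})+\sqrt{a(T)}-\sqrt{\sigma_{S(m)}}=0$, yielding $\zeta = 0$.

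The main obstacle I foresee is the reduction to (\cP): the $\pi_S,\alpha_S$ inherited from \eqref{ineq:EP_mixed} are built with starting value $\sigma_{S(0)}=a(T)$, whereas the $\sigma$ inside the square root of the reduced objective is $u^2$, which varies over $[0,a(T)]$. One has to check that the (\cP)-indexing assumption holds \emph{uniformly} in $u$, which succeeds because $(c_{(i)}+d_{(i)})/a_{(i)}$ turns out to be independent of $u$. The edge case $T=\emptyset$ forces $c_{(1)}=0$ and is absorbed by the ``set $x_{(1)}=1$'' preprocessing convention noted in \autoref{sec:prelim:solve}.
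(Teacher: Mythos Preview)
Your proposal is correct and follows essentially the same route as the paper. The only difference is in packaging: the paper first observes (via concavity of $\sqrt{\,\cdot\,}$) that the maximizer has $y_i^*=1$ for $i\in T$, i.e.\ your $u^*=\sqrt{a(T)}$, and then directly invokes \autoref{prop:validity} with $\sigma=a(T)$, whereas you keep $u$ free, apply \autoref{prop:consec1} for each $u$, and re-do the KKT computation of \autoref{prop:validity} inline with $u$ included. The paper's ordering is slightly more economical since it reuses the already-proved lifted linear case rather than replaying its first-order analysis, but the logical content is the same.
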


\begin{proof} 
	It suffices to prove the validity of inequality
	\begin{align} \label{mixed-1}
	\pi_{S} ' x_S + \sqrt{\sigma + \sum_{i \in T} a_i y_i^2}  - \alpha_S (x_S - y_S) \le t.
	\end{align}
	Consider the optimization problem:
	\begin{align*}
	\zeta = \max \ &  \pi_S' x_S - \alpha_S' (x_S-y_S) + \sqrt{\sigma + \sum_{i \in T} a_i y_i^2} -t \\
	\text{s.t. } & \sigma +  \sum_{i \in N} a_i y_i^2 \le t^2 \\
	& \0 \leq y \leq x \\
	& x \in \{0,1\}^N, \ y \in \R_+^N, \ t \in \R_+.
	\end{align*}
	Inequality \eqref{mixed-1} is valid for $F$ iff $\zeta \le 0$.
	Observing that $x_i^* = y_i^* = 0$ for $i \in N \setminus (S \cup T)$ for an optimal solution $(x^*,y^*)$ and
    eliminating $t$, the problem is written as
	\begin{center}
		\begin{minipage}{0.9\textwidth}
			\leqnomode
			\begin{align*}
			\label{eq:validity} 
			\zeta = \max \ &  \pi_S' x_S - \alpha_S' (x_S-y_S) + \sqrt{\sigma + \sum_{i \in T} a_i y_i^2} - \sqrt{\sigma + \sum_{i \in S \cup T}a_{i} y_{i}^2}  \\
			 \text{s.t. } &    \0 \leq y \leq x \\
			& x \in \{0,1\}^N, \ y \in \R_+^N.
			\end{align*}
			\reqnomode
		\end{minipage}
	\end{center}
	Observe that by concavity of the square root function 
	 we have $y^*_i = 1$, $i \in T$. 
	The validity of 
	\[
	\pi_{S} ' x_S  - \alpha_S (x_S - y_S) \le t - \sqrt{\sigma}
	\]
	with $\bar \sigma = \sigma + a(T)$ and $t \ge \!\sqrt{\bar \sigma + \sum_{i \in S} a_i y_i^2}$ for this restriction implies that $\zeta \le 0$.
\end{proof}

Note that when $S = \emptyset$ and $T = N$, \eqref{ineq:EP_mixed} is equivalent to the original constraint. 
When $S = N$, \eqref{ineq:EP_mixed} is equivalent to \eqref{ineq:EPlift}. 
When $S \subseteq N$ and $T = \emptyset$, \eqref{ineq:EP_mixed} is equivalent to \eqref{ineq:EP_subset}. 
Otherwise, it is the distinct from the three.

\begin{remark}
The following $n+1$ affinely independent points of $F$
satisfy inequality \eqref{ineq:EP_subset} at equality:
\begin{align*}
(x,y,z) &= (\mathbf{0}, \mathbf{0}, \sqrt{\sigma}); \\
(x,y,z) &= \Big(\sum_{k \le i}e_{(k)} + \sum_{j \in T} e_j, \sum_{k \le i}e_{(k)} + \sum_{j \in T} e_j, \sqrt{\sigma_{S(i)}} \Big), \ 
i = 1,2, \ldots, |S|;\\
(x,y,z) &= (e_i, e_i, \sqrt{\sigma + a_i}), \ i \in N \setminus S .
\end{align*}
\end{remark}

The following example illustrates inequality \eqref{ineq:EP_mixed} cutting off a fractional point
that is not cut by the previous inequalities.

\begin{example}
\label{ex:EPmixed}
Consider again the instance in \autoref{ex:EPlift}, and the fractional point
\begin{align*}
\bar{x} = \bar{y} = (0.8, 0.5, 1, 0, 1), \ \bar{z} = -0.1780.
\end{align*}
Note that this point satisfies inequalities \eqref{ineq:EPlift} and \eqref{ineq:EP_subset} generated in \autoref{ex:EPlift} and \autoref{ex:EPsubset}.
Letting $S = \{ 1, 2 \}$ and $T = \{3,5\}$, we have $a(T) = a_3 + a_5 = 38$. For the permutation (1,2), $\pi_S$ is computed as
\begin{align*}
{\pi_S}_{(1)} &= \pi_1 = \sqrt{a_1 + a(T)} - \sqrt{a(T)} = \sqrt{60} - \sqrt{38} = 1.5816, \\
{\pi_S}_{(2)} &= \pi_2 = \sqrt{a_1 + a_2 + a(T)} - \sqrt{a_1 + a(T)} =  \sqrt{78} - \sqrt{60} = 1.0858
\end{align*}
and we arrive at the corresponding inequality  \eqref{ineq:EP_mixed} with coefficients
\begin{align*}
\pi_S &= \begin{bmatrix}
1.5816, 1.0858, 0, 0, 0
\end{bmatrix}, \\
\alpha_S &= \begin{bmatrix}
2.8402, 2.0381, 0, 0, 0
\end{bmatrix}.
\end{align*}
Observe the point $(\bar{x}, \bar{y}, \bar{z})$ is cut off by \eqref{ineq:EP_mixed} as
\[ \sqrt{ \bigg ( \big (\pi_{S} ' \bar{x}_S + \sqrt{\sigma + \sum_{i \in T} a_i \bar{y}_i^2} - \alpha_S ' (\bar{x}_S - \bar{y}_S) \big )^+ \bigg )^2 + \sum_{i \in N \setminus (S \cup T)} a_i \bar{y}_i^2 }- \bar{z} = 0.4506 > 0. \]
\end{example}

\section{Computational Experiments}
\label{sec:computation}

In this section, we report the result of computational experiments performed to test the effectiveness of inequalities
\eqref{ineq:EPlift}, \eqref{ineq:EP_subset}, and \eqref{ineq:EP_mixed} in strengthening the continuous relaxation of mean-risk problems with on-off constraints. Three types of problems are used for testing: mean-risk problem with fixed-charges, mean-risk problem with a cardinality constraint, as well as the more general mean-risk problem with correlations and cardinality constraint.

All experiments are done using CPLEX 12.6.2 solver on a workstation with a 2.93GHz Intel
R CoreTM i7 CPU and 8 GB main memory and with a single thread. The time limit is set to two hours and CPLEX’ default settings are used with two exceptions: dynamic search is disabled to utilize the cut callbacks and the nodes are solved with the linear outer approximation for faster enumeration with node warm starts. The inequalities are added at nodes with depth less than ten.

\subsubsection*{Gradient cuts}
Recall that inequalities \eqref{ineq:EPlift} are linear; however, inequalities \eqref{ineq:EP_subset} and \eqref{ineq:EP_mixed} are (convex) non-linear. Since only linear cuts can be added using CPLEX callbacks, at a differentiable point $(\bar{x}, \bar{y})$, instead of a nonlinear cut $f(x,y) \le z$, we add the corresponding gradient cut
\begin{align*}
f(\bar{x}, \bar{y}) + [\nabla_x f(\bar{x}, \bar{y})]'(x - \bar{x}) + [\nabla_y f(\bar{x}, \bar{y})]' (y - \bar{y}) \le z.
\end{align*}
The gradient cut for inequality \eqref{ineq:EP_subset} at $(\bar{x}, \bar{y})$ with $\pi'_S \bar x_S + \sqrt{\sigma} > \alpha'_S(\bar x_S - \bar y_S)$ has the following form:
\begin{align}
\label{ineq:EP_subset_grad}
\kappa_1 + \frac{1}{f_1(\bar{x}, \bar{y})} \left [ \tau_1(\bar x, \bar y)  \big [ \pi_S'x_S - \alpha_S' (x_S - y_S) \big ]
+ \! \! \sum_{i \in N \setminus S} \! a_i \bar{y}_i y_i  \right ] \le z,
\end{align}
where
\begin{align*}
f_1(x,y) &=  \sqrt{\tau_1(x_S,y_S)^2 + \sum_{i\in N \setminus S} a_i y_i^2}, \\
\tau_1(x_S,y_S) &= \pi_S ' x_S + \sqrt{\sigma} - \alpha_S ' ({x_S} - {y_S}),
\end{align*}
and
\begin{align*}
\kappa_1 & = f_1(\bar{x}, \bar{y}) - \frac{\tau_1(\bar{x}_S, \bar{y}_S)\big[\tau_1(\bar{x}_S, \bar{y}_S) - \sqrt{\sigma}\big] + \sum_{i \in N \setminus S} a_i \bar{y_i}^2 }{f_1(\bar{x},\bar{y})} \cdot 
\end{align*}

\noindent 
Observe that $\kappa_1$ is a constant that equals to zero when $\sigma = 0$.

The gradient cut for inequality \eqref{ineq:EP_mixed} at $(\bar{x}, \bar{y})$ 
with $\pi'_S \bar x_S + \sqrt{ \sigma \! + \! \sum_{i \in T} a_i \bar y_i^2} > \alpha'_S(\bar x_S - \bar y_S)$ 
has the form: 
\begin{align}
\label{ineq:EP_mixed_grad} 
\kappa_2 + \frac{1}{f_2(\bar{x}, \bar{y})}  \left [ \tau_2(\bar{x}, \bar{y}) \bigg [ \pi_S 'x_S - \alpha_S'(x_S - y_S) + \sum_{i \in T} \frac{a_i \bar{y}_i}{\nu(\bar{y})} y_i \bigg ] 
+ \! \sum_{i \in N \setminus (S \cup T)} \! a_i \bar{y}_i y_i \right ]    \le   z ,
\end{align}
where
\begin{align*}
f_2(x,y) &=  \sqrt{\tau_2(x_S,y_{S \cup T})^2  + \sum_{i\in N \setminus (S \cup T)} a_i y_i^2}, \\ 
\tau_2(x_S,y_{S \cup T}) &= \pi_{S} ' x_S + \nu(y_T) - \alpha_S' (x_S - y_S),\\
\nu(y_T) &=   \sqrt{\sigma + \sum_{i \in T} a_i {y}_i^2}, 
\end{align*}
and
\begin{align*}
\kappa_2 &= f_2(\bar{x}, \bar{y}) - 
\frac{\tau_2(\bar{x}_S, \bar{y}_{S \cup T}) 
	\big[ \pi_S' \bar{x}_S \! - \! \alpha_S (\bar{x}_S \! - \! \bar{y}_S) \! + \! \sum_{i \in T} \frac{a_i}{\nu(\bar y_T)} \bar{y}_i^2 \big] \! + \! \sum_{i \in N \setminus (S \cup T)} a_i \bar{y}_i^2 }{f_2(\bar{x}, \bar{y})} \cdot
\end{align*}

\noindent 
Observe that $\kappa_2$ is a constant that equals to zero when $\sigma = 0$.

\subsubsection*{Separation} 

The separation problem for inequalities \eqref{eq:extendedPolymatroidInequality} and $\conv(K_\sigma)$ is solved exactly and fast due to Edmond's greedy algorithm for optimization over polymatroids. We do not have such an exact separation algorithm for the lifted polymatroid inequalities and, therefore, use an inexact approach.

Given a point $(\bar{x}, \bar{y}, \bar{z})$, the separation for inequalities \eqref{ineq:EPlift} and $\conv(F)$ entails finding a permutation 
of $N$ for which the violation is maximized. If $\bar{x} = \bar{y}$, as it is the case for optimal 
solutions of the continuous relaxation of $(\cP)$ (see Appendix~\ref{subsec:solveR}), inequality \eqref{ineq:EPlift} coincides with the original polymatroid inequalitiy \eqref{eq:extendedPolymatroidInequality}. Therefore, we check the violation of inequality \eqref{ineq:EPlift} generated for a permutation $\left( (1), \ldots, (n) \right )$ satisfying
$\bar{x}_{(1)}\geq \bar{x}_{(2)} \geq \ldots \geq \bar{x}_{(n)}$.
If inequality \eqref{ineq:EPlift} is violated, then it is added to the formulation. Otherwise,
we attempt to find violated inequalities \eqref{ineq:EP_subset} and \eqref{ineq:EP_mixed} for the same permutation.
 
For inequality \eqref{ineq:EP_subset},
starting from $S = N$, we check for $i = (n), \ldots, (1)$ such that $\bar{x}_i - \bar{y}_i > 0$, 
whether moving $i$ from $S$ to $N \setminus S$ results in a violated inequality. If so, the corresponding gradient cut \eqref{ineq:EP_subset_grad} is added to the formulation. Similarly, for inequality \eqref{ineq:EP_mixed}, starting from thus constructed $S$, we check for  $i = (1), \ldots, (|S|)$ such that $\bar{x}_i - \bar{y}_i > 0$, whether moving $i$ from $S$ to $T$ results in a violated inequality.  If so, the corresponding gradient cut \eqref{ineq:EP_mixed_grad} is added to the formulation.
This heuristic is repeated for two additional permutations of $N$: one such that 
$a_{(1)}\bar{x}_{(1)}\geq a_{(2)}\bar{x}_{(2)} \geq \cdots \geq a_{(n)}\bar{x}_{(n)}$, and the other such that 
$a_{(1)}/\bar{x}_{(1)}\geq a_{(2)}/\bar{x}_{(2)} \geq \cdots \geq a_{(n)}/\bar{x}_{(n)}$. 
Throughout the branch-and-bound algorithm, the entire cut generation process is applied up to $5,000$ times for the first permutation, and up to $500$ times for the two additional permutations.



\subsection{Fixed-charge objective}
The first set of experiments are done on an optimization problem with fixed charges.
Each non-zero $y_i$ has a fixed-cost $c_i$, $i \in N$, which is modeled with cost vector $c$ 
on the binary indicator variables $x$:
\label{subsec:pen}
\begin{center}
\begin{minipage}{0.8\textwidth}
\leqnomode
\begin{align*}
\label{eq:pen} \tag{$\cP_f$}
\begin{split}
 \min & \ c'x + d'y + \Phi^{-1}(1-\epsilon)z \\
\text{s.t. } & \sum_{i \in N} a_i y_i^2 \le z^2 \\
                & \0 \leq y \le x  \\
                & x \in \{0,1\}^N, y \in \R_+^N, z \in \R_+.
\end{split}
\end{align*}
\end{minipage}
\end{center}

Five random instances are generated for each combination of confidence level $1-\epsilon \in \{0.9, 0.95, 0.975 \}$ 
and size $n \in \{100, 300, 500\}$.
Coefficients $a_i, \ i \in N$, are drawn from  integer uniform $[0.9n,1.2n]$, $c_i, \ i \in N,$ are drawn from  integer uniform $[5, 20]$. Finally,
for $i \in N$, $d_i$ is set to $-c_i - h_i$, where $h_i$ is drawn from integer uniform $[1,4]$. The data used for the experiments is publicly available for download at \texttt{http://ieor.berkeley.edu/$\sim$atamturk/data/} .

We compare the original and the strengthened formulations in Table~\ref{tb:pen_3cuts}. Each row of the table 
presents the averages for five instances. We report the
percentage integrality gap at the root node (rgap), solution time (time) in CPU seconds,
the percentage gap between the best upper bound and lower bound at termination (egap), and the number of nodes explored (nodes). 
The number of cuts generated for each type is also reported. 
If there are instances not solved to optimality within the time limit, we report their number  (\#) next to egap.


\ignore{
\begin{table}[b]
\footnotesize
\centering
\caption{Computations with $\cP_f$.}
\label{tb:pen_3cuts}
\setlength{\tabcolsep}{1pt} 
\begin{tabular}{c|c|rrrr|rrrrrrr}
\hline \hline
\multicolumn{2}{c|}{ } & \multicolumn{4}{c|}{ Default} & \multicolumn{7}{c}{With cuts} \\
\hline
$n$ & $1-\epsilon$ & rgap & time & egap (\#) & nodes & rgap & time & egap (\#) & nodes & cuts: \eqref{ineq:EPlift} &  \eqref{ineq:EP_subset} &  \eqref{ineq:EP_mixed} \\
\hline
\multirow{3}{*} {100}
 & 0.9 & 3.0 & 6,099 & 1.4 (4) & 237,470 & 0.0 & 0 & 0.0\phantom{ (5)}  & 0  & 14 & 0 & 0  \\
 & 0.95 & 10.9 & 7,200 & 8.6 (5) & 166,954 & 0.0 & 0 & 0.0\phantom{ (5)}  & 0 & 70 & 0 & 0  \\
 & 0.975 & 30.7 & 7,200 & 26.2 (5) & 226,365 & 0.0 & 0 & 0.0\phantom{ (5)} & 0 & 82 & 15 & 0  \\
\hline
\multirow{3}{*} {300}
 & 0.9 & 4.0 & 7,200 & 3.8 (5) & 125,005 & 0.0 & 1  & 0.0\phantom{ (5)}  & 0 & 49 & 0 & 0  \\
 & 0.95 & 12.8 & 7,200 & 12.7 (5) & 120,597 & 0.0 & 29 & 0.0\phantom{ (5)}  & 0 & 425 & 6 & 0  \\
 & 0.975 & 31.4 & 7,200 & 31.4 (5) & 134,433 & 0.0 & 23 & 0.0\phantom{ (5)}   & 0 & 437 & 36 & 1  \\
\hline
\multirow{3}{*} {500}
 & 0.9 & 3.9 & 7,200 & 3.8 (5) & 108,684 & 0.0 & 5  & 0.0\phantom{ (5)}   & 0 & 83 & 0 & 0  \\
 & 0.95 & 12.1 & 7,200 & 12.1 (5) & 112,765 & 0.0  & 253 & 0.0\phantom{ (5)}   & 0 & 693 & 12 & 0  \\
 & 0.975 & 31.3 & 7,200 & 31.3 (5) & 177,891 & 0.0  & 913  & 0.0\phantom{ (5)} & 0 & 1,119 & 158 & 2 \\
\hline 
\multicolumn{2}{c|}{\textbf{avg} }  &$ \BD{15.6}$& $\BD{7078}$ & $\BD{14.6}$\phantom{ (5)}&$\BD{156685}$ 
& $\BD{0.0}$ &$\BD{136}$ & $\BD{0.0}$\phantom{ (5)} &$\BD{0}$&$\BD{330}$&$\BD{25}$&$\BD{0}$ \\

\hline \hline
\end{tabular}
\end{table}
}

\begin{table}[b]
	\footnotesize
	\centering
	\caption{Computations with OPT$_f$.}
	\label{tb:pen_3cuts}
	\setlength{\tabcolsep}{1pt} 
	\begin{tabular}{c|c|rrrr|rrrrrrr}
		\hline \hline
		\multicolumn{2}{c|}{ } & \multicolumn{4}{c|}{ Default} & \multicolumn{7}{c}{With cuts} \\
		\hline
		$n$ & $1-\epsilon$ & rgap & time & egap (\#) & nodes & rgap & time & egap (\#) & nodes & cuts: \eqref{ineq:EPlift} &  \eqref{ineq:EP_subset} &  \eqref{ineq:EP_mixed} \\
		\hline
		\multirow{3}{*} {100}
		& 0.9 & 3.0 & 6,099 & 1.4 (4) & 237,470 & 0.0 & 0 & 0.0\phantom{(5)}  & 0  & 14 & 0 & 0  \\
		& 0.95 & 10.9 & 7,200 & 8.6 (5) & 166,954 & 0.0 & 0 & 0.0\phantom{(5)}  & 0 & 70 & 0 & 0  \\
		& 0.975 & 30.7 & 7,200 & 26.2 (5) & 226,365 & 0.0 & 0 & 0.0\phantom{(5)} & 0 & 82 & 15 & 0  \\
		\hline
		\multirow{3}{*} {300}
		& 0.9 & 4.0 & 7,200 & 3.8 (5) & 125,005 & 0.0 & 1  & 0.0\phantom{(5)}  & 0 & 49 & 0 & 0  \\
		& 0.95 & 12.8 & 7,200 & 12.7 (5) & 120,597 & 0.0 & 29 & 0.0\phantom{(5)}  & 0 & 425 & 6 & 0  \\
		& 0.975 & 31.4 & 7,200 & 31.4 (5) & 134,433 & 0.0 & 23 & 0.0\phantom{(5)}   & 0 & 437 & 36 & 1  \\
		\hline
		\multirow{3}{*} {500}
		& 0.9 & 3.9 & 7,200 & 3.8 (5) & 108,684 & 0.0 & 5  & 0.0\phantom{(5)}   & 0 & 83 & 0 & 0  \\
		& 0.95 & 12.1 & 7,200 & 12.1 (5) & 112,765 & 0.0  & 253 & 0.0\phantom{(5)}   & 0 & 693 & 12 & 0  \\
		& 0.975 & 31.3 & 7,200 & 31.3 (5) & 177,891 & 0.0  & 913  & 0.0\phantom{(5)} & 0 & 1,119 & 158 & 2 \\
		\hline 
		\multirow{3}{*} {1000}
		& 0.9 & 3.9 & 7,200 & 3.9(5) & 87,363 & 0.0 & 48 & 0.0\phantom{(0)} & 0 & 197 & 0 & 0 \\
		& 0.95 & 12.4 & 7,200 & 12.4(5) & 96,394 & 0.1 & 7,200 & 0.1(5) & 73 & 1,657 & 47 & 0 \\
		& 0.975 & 30.9 & 7,200 & 30.9(5) & 110,276 & 0.3 & 7,200 & 0.3(5) & 2 & 1,631 & 25 & 0 \\
		\hline
		\multicolumn{2}{c|}{\textbf{avg} }  &$ \BD{15.6}$& $\BD{7108}$ & $\BD{14.9}$\phantom{()}&$\BD{142016}$ 
		& $\BD{0.0}$ &$\BD{1306}$ & $\BD{0.0}$\phantom{()} &$\BD{6}$&$\BD{538}$&$\BD{25}$&$\BD{0}$ \\
		\hline \hline
	\end{tabular}
\end{table}

One observes in Table~\ref{tb:pen_3cuts} that the cuts have a profound effect in solving problem \eqref{eq:pen}. 
With the default setting, only one of \rev{60 instances is solved to optimality within two hours.  
The optimality gap reduces from $15.6\%$ to $14.9\%$ after exploring 142,016 nodes on average.  
On the other hand, when the cuts are added using the separation procedure outlined above,
50 of the 60 instances are solved at the root node without the need for enumeration. For the 10 instances that are not provably solved to optimality, the optimality gap is merely 0.2\% compared to 21.6\% with the default version for the same instances.}

\subsection{Cardinality constraint}
\label{subsec:card}
The second problem type with binary indicator variables has a cardinality constraint on the maximum non-zero $y_i, \ i \in N$. 
\begin{center}
\begin{minipage}{0.8\textwidth}
\leqnomode
\begin{align*}
\label{eq:card} \tag{$\cP_c$}
\begin{split} 
\min & \  d' y + \ \Phi^{-1}(1-\epsilon)z \\
\text{s.t. } & \sum_{i \in N} a_i y_i^2 \le z^2\\
                & \sum_{i \in N} x_i \leq \kappa n \\
                & \0 \leq y \le x  \\
& x \in \{0,1\}^N, y \in \R_+^N, z \in \R_+.
\end{split}
\end{align*}
\end{minipage}
\end{center}
Instances are tested with two cardinality levels ($\kappa = 0.2, 0.4$).
Other parameters are generated as before.

The result of computations for \eqref{eq:card} is summarized in \autoref{tb:card_3cuts}.
Although the root gap for this type of problem is smaller compared to the fixed-charge objective problem, \rev{
only 27 out of 180 instances are solved to optimality using the default setting. 
When the cuts are utilized, the average root gap is reduced by 70$\%$ and 126 of the 180 instances are solved to optimality
within the time limit. 
Accordingly, the average solution time as well as the number of nodes explored is reduced by orders of magnitude.
}


\ignore{
\begin{table}[t!]
\footnotesize
\centering
\caption{Computations with $\cP_c$.}
\label{tb:card_3cuts}
\setlength{\tabcolsep}{1pt} 
\begin{tabular}{c|c|c|rrrr|rrrrrrr}
\hline \hline
\multicolumn{3}{c}{ } & \multicolumn{4}{|c|}{ Default} & \multicolumn{7}{c}{With cuts} \\
\hline
$n$ & $\kappa$ & $1-\epsilon$ & rgap & time & egap (\#) & nodes & rgap  & time & egap (\#) & nodes & cuts: \eqref{ineq:EPlift} & \eqref{ineq:EP_subset} & \eqref{ineq:EP_mixed} \\
\hline
\multirow{6}{*} {100} & \multirow{3}{*}{0.4}
 & 0.9 & 0.2 & 8 & 0.0\phantom{ (5)} & 4,312 & 0.0  & 0  & 0.0\phantom{ (5)} & 0  & 5  & 0 & 0     \\
 & & 0.95 & 0.4 & 148 & 0.0\phantom{ (5)} & 23,884 & 0.0  & 0  & 0.0\phantom{ (5)}  & 0  & 16 & 0 & 0      \\
 & & 0.975 & 0.6 & 2,011 & 0.0\phantom{ (5)} & 110,692 & 0.0 & 0 & 0.0\phantom{ (5)} & 0 & 26 & 0 & 0      \\
\cline{2-14}
& \multirow{3}{*}{0.2}
	 & 0.9 & 1.1 & 718 & 0.0\phantom{ (5)} & 104,967 & 0.0 & 0 & 0.0\phantom{ (5)}  & 0 & 58 & 0 & 0     \\
 & & 0.95 & 2.2 & 4,838 & 0.5 (2) & 357,476 & 0.0 & 1  & 0.0\phantom{ (5)}  & 1  & 116  & 0 & 0     \\
 & & 0.975 & 3.7 & 7,200 & 1.8 (5) & 417,491 & 0.2 & 2 & 0.0\phantom{ (5)}  & 2 & 225 & 0 & 0     \\
\hline
\multirow{6}{*} {300} & \multirow{3}{*}{0.4}
	 & 0.9 & 0.4 & 7,200 & 0.3 (5) & 231,909 & 0.0 & 1 & 0.0\phantom{ (5)}  & 0  & 62  & 0 & 0     \\
 & & 0.95 & 0.7 & 7,200 & 0.6 (5) & 246,384 & 0.0  & 1  & 0.0\phantom{ (5)}  & 0  & 97  & 0 & 0     \\
 & & 0.975 & 1.0 & 7,200 & 1.0 (5) & 224,057 & 0.0  & 2  & 0.0\phantom{ (5)}  & 0  & 128  & 4 & 0    \\
\cline{2-14}
& \multirow{3}{*}{0.2}
	 & 0.9 & 1.9 & 7,200 & 1.7 (5) & 276,720 & 0.0 & 22 & 0.0\phantom{ (5)}  & 0  & 367  & 7 & 0    \\
 & & 0.95 & 3.2 & 7,200 & 3.1 (5) & 251,031 & 0.1 & 110 & 0.0\phantom{ (5)}  & 18 & 912 & 169 & 0    \\
 & & 0.975 & 4.8 & 7,200 & 4.6 (5) & 265,820 & 0.3 & 463 & 0.0\phantom{ (5)} & 116  & 1,565 & 680 & 2    \\
\hline
\multirow{6}{*} {500} & \multirow{3}{*}{0.4}
	 & 0.9 & 0.5 & 7,200 & 0.4 (5) & 191,359 & 0.0 & 23 & 0.0\phantom{ (5)}  & 0  &252  & 0 & 0    \\
 & & 0.95 & 0.8 & 7,200 & 0.8 (5) & 178,825 & 0.0  & 32  & 0.0\phantom{ (5)}  & 0  & 302  & 8 & 0     \\
 & & 0.975 & 1.1 & 7,200 & 1.1 (5) & 170,622 & 0.0 & 59 & 0.0\phantom{ (5)}  & 0  & 414  & 49 & 0     \\
\cline{2-14}
& \multirow{3}{*}{0.2}
	 & 0.9 & 2.0 & 7,200 & 2.0 (5) & 208,206 & 0.0 & 487 & 0.0\phantom{ (5)} & 10 & 1,140  & 51 & 27     \\
 & & 0.95 & 3.4 & 7,200 & 3.3 (5) & 215,453 & 0.1  & 2,372  & 0.0 (1) & 10,521  & 1,934 & 543 & 2   \\
 & & 0.975 & 4.9 & 7,200 & 4.9 (5) & 216,386 & 0.3 & 5,090 & 0.0 (2) & 5,669 & 3,283 & 1,725 & 8   \\
\hline
\multicolumn{3}{c|}{\textbf{avg} }  &$ \BD{1.8}$& $\BD{5629}$ & $\BD{1.4}$\phantom{ (5)}&$\BD{205311}$
& $\BD{0.1}$ &$\BD{481}$&$\BD{0.0}$\phantom{ (.)}&$\BD{908}$&$\BD{606}$&$\BD{180}$&$\BD{2}$  \\
\hline \hline
\end{tabular}
\end{table}
}

\begin{table}
	\footnotesize
	\centering
	\caption{Computations with OPT$_c$.}
	\label{tb:card_3cuts}
	\setlength{\tabcolsep}{1pt} 
	\begin{tabular}{c|c|c|rrrr|rrrrrrr}
		\hline \hline
		\multicolumn{3}{c}{ } & \multicolumn{4}{|c|}{ Default} & \multicolumn{7}{c}{With cuts} \\
		\hline
		$n$ & $\kappa$ & $1-\epsilon$ & rgap & time & egap (\#) & nodes & rgap  & time & egap (\#) & nodes & cuts: \eqref{ineq:EPlift} & \eqref{ineq:EP_subset} & \eqref{ineq:EP_mixed} \\
		\hline
		\multirow{9}{*} {100} & \multirow{3}{*}{0.4}
		& 0.9 & 0.2 & 8 & 0.0\phantom{(5)} & 4,312 & 0.0  & 0  & 0.0\phantom{(5)} & 0  & 5  & 0 & 0     \\
		& & 0.95 & 0.4 & 148 & 0.0\phantom{(5)} & 23,884 & 0.0  & 0  & 0.0\phantom{(5)}  & 0  & 16 & 0 & 0      \\
		& & 0.975 & 0.6 & 2,011 & 0.0\phantom{(5)} & 110,692 & 0.0 & 0 & 0.0\phantom{(5)} & 0 & 26 & 0 & 0      \\
		\cline{2-14}
		& \multirow{3}{*}{0.2}
		& 0.9 & 1.1 & 718 & 0.0\phantom{(5)} & 104,967 & 0.0 & 0 & 0.0\phantom{(5)}  & 0 & 58 & 0 & 0     \\
		& & 0.95 & 2.2 & 4,838 & 0.5(2) & 357,476 & 0.0 & 1  & 0.0\phantom{(5)}  & 1  & 116  & 0 & 0     \\
		& & 0.975 & 3.7 & 7,200 & 1.8(5) & 417,491 & 0.2 & 2 & 0.0\phantom{(5)}  & 2 & 225 & 0 & 0     \\
		\cline{2-14}
		& \multirow{3}{*}{0.1}
		& 0.9 &  4.7 & 2,993 & 0.1(1) & 247,352 & 1.8 & 2 & 0.0\phantom{(0)} & 12 & 326 & 0 & 0 \\
		& & 0.95 &   8.4 & 7,200 & 3.0(5) & 461,193 & 3.8 & 8 & 0.0\phantom{(0)} & 26 & 695 & 0 & 0 \\
		& & 0.975 &   12.6 & 7,200 & 6.2(5) & 512,712 & 6.1 & 26 & 0.0\phantom{(0)} & 56 & 1,614 & 0 & 0 \\
		\hline
		\multirow{9}{*} {300} & \multirow{3}{*}{0.4}
		& 0.9 & 0.4 & 7,200 & 0.3(5) & 231,909 & 0.0 & 1 & 0.0\phantom{(5)}  & 0  & 62  & 0 & 0     \\
		& & 0.95 & 0.7 & 7,200 & 0.6(5) & 246,384 & 0.0  & 1  & 0.0\phantom{(5)}  & 0  & 97  & 0 & 0     \\
		& & 0.975 & 1.0 & 7,200 & 1.0(5) & 224,057 & 0.0  & 2  & 0.0\phantom{(5)}  & 0  & 128  & 4 & 0    \\
		\cline{2-14}
		& \multirow{3}{*}{0.2}
		& 0.9 & 1.9 & 7,200 & 1.7(5) & 276,720 & 0.0 & 22 & 0.0\phantom{(5)}  & 0  & 367  & 7 & 0    \\
		& & 0.95 & 3.2 & 7,200 & 3.1(5) & 251,031 & 0.1 & 110 & 0.0\phantom{(5)}  & 18 & 912 & 169 & 0    \\
		& & 0.975 & 4.8 & 7,200 & 4.6(5) & 265,820 & 0.3 & 463 & 0.0\phantom{(5)} & 116  & 1,565 & 680 & 2    \\
		\cline{2-14}
		& \multirow{3}{*}{0.1}
		& 0.9 &  5.9 & 7,200 & 5.3(5) & 292,785 & 1.3 & 1,689 & 0.0(1) & 10,338 & 2,779 & 461 & 1 \\
		& & 0.95 & 10.1 & 7,200 & 9.3(5) & 274,213 & 3.2 & 6,193 & 0.5(4) & 11,958 & 4,893 & 3,428 & 61 \\
		& & 0.975 &  14.7 & 7,200 & 13.9(5) & 271,462 & 5.5 & 7,201 & 2.0(5) & 5,235 & 5,000 & 3,604 & 70 \\
		\hline
		\multirow{9}{*} {500} & \multirow{3}{*}{0.4}
		& 0.9 & 0.5 & 7,200 & 0.4(5) & 191,359 & 0.0 & 23 & 0.0\phantom{(5)}  & 0  &252  & 0 & 0    \\
		& & 0.95 & 0.8 & 7,200 & 0.8(5) & 178,825 & 0.0  & 32  & 0.0\phantom{(5)}  & 0  & 302  & 8 & 0     \\
		& & 0.975 & 1.1 & 7,200 & 1.1(5) & 170,622 & 0.0 & 59 & 0.0\phantom{(5)}  & 0  & 414  & 49 & 0     \\
		\cline{2-14}
		& \multirow{3}{*}{0.2}
		& 0.9 & 2.0 & 7,200 & 2.0(5) & 208,206 & 0.0 & 487 & 0.0\phantom{(5)} & 10 & 1,140  & 51 & 27     \\
		& & 0.95 & 3.4 & 7,200 & 3.3(5) & 215,453 & 0.1  & 2,372  & 0.0 (1) & 10,521  & 1,934 & 543 & 2   \\
		& & 0.975 & 4.9 & 7,200 & 4.9(5) & 216,386 & 0.3 & 5,090 & 0.0 (2) & 5,669 & 3,283 & 1,725 & 8   \\
		\cline{2-14}
		& \multirow{3}{*}{0.1}
		& 0.9 &  6.2 & 7,200 & 6.1(5) & 237,470 & 1.4 & 5,718 & 0.4(3) & 1,113 & 4,520 & 2,606 & 2 \\
		& & 0.95 &   10.5 & 7,200 & 10.4(5) & 232,484 & 3.5 & 7,200 & 2.3(5) & 335 & 4,412 & 2,341 & 28 \\
		& & 0.975 &   15.2 & 7,200 & 15.1(5) & 205,308 & 5.9 & 7,200 & 4.6(5) & 646 & 4,250 & 2,092 & 5 \\
		\hline
		\multirow{9}{*} {1000} & \multirow{3}{*}{0.4}
		& 0.9 &  0.5 & 7,200 & 0.5(5) & 102,180 & 0.0 & 311 & 0.0\phantom{(0)} & 0 & 509 & 43 & 0 \\
		& & 0.95 &  0.8 & 7,200 & 0.8(5) & 99,684 & 0.0 & 550 & 0.0\phantom{(0)} & 0 & 762 & 0 & 0 \\
		& & 0.975 &  1.1 & 7,200 & 1.1(5) & 116,666 & 0.0 & 1,265 & 0.0\phantom{(0)} & 0 & 1,094 & 0 & 0 \\
		\cline{2-14}
		& \multirow{3}{*}{0.2}
		& 0.9 &  2.1 & 7,200 & 2.1(5) & 142,802 & 0.0 & 6,792 & 0.0(3) & 179 & 2,817 & 373 & 0 \\
		& & 0.95 & 3.5 & 7,200 & 3.5(5) & 200,241 & 0.2 & 7,200 & 0.2(5) & 28 & 2,680 & 368 & 0 \\
		& & 0.975 &   5.3 & 7,200 & 5.3(5) & 187,518 & 0.8 & 7,200 & 0.8(5) & 0 & 2,053 & 0 & 0 \\
		\cline{2-14}
		& \multirow{3}{*}{0.1}
		& 0.9 & 6.8 & 7,200 & 6.8(5) & 178,937 & 2.0 & 7,200 & 2.0(5) & 0 & 1,938 & 0 & 0 \\
		& & 0.95 &   11.4 & 7,200 & 11.4(5) & 205,205 & 4.5 & 7,200 & 4.5(5) & 0 & 1,828 & 0 & 0 \\
		& & 0.975 & 16.6 & 7,200 & 16.6(5) & 214,779 & 7.4 & 7,200 & 7.4(5) & 0 & 1,915 & 0 & 0 \\
		\hline
		\multicolumn{3}{c|}{\textbf{avg} }  &$ \BD{4.6}$& $\BD{6301}$ & $\BD{4.0}$\phantom{(5)}&$\BD{211091}$
		& $\BD{1.4}$ &$\BD{2467}$&$\BD{0.7}$\phantom{(.)}&$\BD{1285}$&$\BD{1527}$&$\BD{515}$&$\BD{6}$  \\
		\hline \hline
	\end{tabular}
\end{table}

\ignore{
\subsection{Threshold constraint}

\begin{center}
	\begin{minipage}{0.8\textwidth}
		\leqnomode
		\begin{align*}
		\label{eq:threshold} \tag{$\cP_t$}
		\begin{split} 
		\min & \  d' y + \ \Phi^{-1}(1-\epsilon)z \\
		\text{s.t. } & \sum_{i \in N} a_i y_i^2 \le z^2\\
		& \ell \circ x \leq y \le u \circ x \\
		& x \in \{0,1\}^N, y \in \R_+^N
		\end{split}
		\end{align*}
	\end{minipage}
\end{center}

$\ell_i$ = 0.01, 0.05
$u_i$ = 0.1
}


\subsection{Correlated case with cardinality constraint}
\label{subsec:corr}
Finally, although the cuts are developed for the diagonal uncorrelated case, 
we test their effectiveness on the more general correlated case with a cardinality constraint. 
Using the reformulation introduced in \autoref{sec:intro}, we state the problem as
\begin{center}
\begin{minipage}{0.8\textwidth}
\leqnomode
\begin{align*}
\label{eq:corr} \tag{$\cP_{corr}$}
\begin{split}
 \min & \ d'y + \Phi^{-1}(1-\epsilon) z \\
\text{s.t. } 
 & y' V y \le s^2 \\
 & s^2 + \sum_{i \in N} a_i y_i^2 \le z^2 \\               
                & \sum_{i \in N} x_i \leq \kappa n \\
                & \0 \leq y \leq x \\
                & x \in \{0,1\}^N, \ y \in \R_+^N, z \in \R_+.
\end{split}
\end{align*}
\end{minipage}
\end{center}
The covariance matrix $V \in \mathbb{R}^{n \times n}$ is computed using a factor model $V = \rho EFE'$, 
where $E \in \mathbb{R}^{n \times m}$ represents the exposures and $F \in \mathbb{R}^{m \times m}$ the factor covariance with
$m = n/10$.
We use a scaling parameter $\rho$ to test the impact of the magnitude of correlations on the difficulty of the problem. 
Since the cuts are developed for the diagonal case, we expect them to perform well for small $\rho$.
To ensure positive semidefiniteness, $F$ is computed as $F = GG'$ for $G \in \mathbb{R}^{m \times m}$. 
Each $G_{ij}$, $i,j \in [m]$ is drawn from uniform $[-1, 1]$, and 
$E_{ij}$, $i \in [n], \ j \in [m]$ is drawn from uniform $[0,0.1]$
with probability $0.2$ and set to $0$ with probability $0.8$.   
All other parameters are generated as before.

\begin{table}[h!]
\footnotesize
\centering
\caption{Computations with $\cP_{corr}$.}
\label{tb:corr50_3cuts}
\setlength{\tabcolsep}{1pt} 
\begin{tabular}{c|c|c|rrrr|rrrrrrr}
\hline \hline
\multicolumn{3}{c|}{ } & \multicolumn{4}{c|}{ Default } & \multicolumn{7}{c}{With cuts} \\
\hline
n & $\rho$ &  $1-\epsilon$ & rgap & time & egap (\#) & nodes & rgap  & time & egap (\#) & nodes & cuts: \eqref{ineq:EPlift} &  \eqref{ineq:EP_subset} & \eqref{ineq:EP_mixed} \\
\hline
\multirow{12}{*}{100} & \multirow{3}{*}{0.1}
& 0.9 & 1.2 & 3,039 & 0.1(2) & 97,969 & 0.0 & 0 & 0.0\phantom{(5)} & 0 & 55 & 0 & 0 \\
&& 0.95 & 2.3 & 6,814 & 0.7(4) & 228,397 & 0.0 & 1 & 0.0\phantom{(5)} & 0 & 114 & 0 & 0 \\
&& 0.975 & 3.7 & 7,200 & 2.2(5) & 260,025 & 0.2 & 2 & 0.0\phantom{(5)} & 9 & 219 & 0 & 0 \\
\cline{2-14} 
&\multirow{3}{*} {1} 
 &  0.9 & 1.1 & 3,028 & 0.1(2) & 101,237 & 0.0 & 0 & 0.0\phantom{(5)} & 0 & 61 & 0 & 0 \\
 & & 0.95 &2.3 & 7,200 & 0.8(5) & 201,105 & 0.0 & 1 & 0.0\phantom{(5)} & 1 & 120 & 0 & 0 \\
 & & 0.975 &3.6 & 7,200 & 2.2(5) & 216,742 & 0.2 & 3 & 0.0\phantom{(5)} & 9 & 268 & 0 & 0 \\
\cline{2-14}
&\multirow{3}{*} {10} 
 & 0.9 & 1.1 & 3,017 & 0.1(2) & 111,262 & 0.0 & 1 & 0.0\phantom{(5)} & 10 & 144 & 0 & 0 \\
 & & 0.95 & 2.2 & 7,200 & 0.74(5) & 229,966 & 0.0 & 1 & 0.0\phantom{(5)} & 6 & 160 & 0 & 0 \\
 & & 0.975 & 3.5 & 7,200 & 2.16(5) & 243,539 & 0.2 & 3 & 0.0\phantom{(5)} & 21 & 402 & 0 & 0 \\
\hline
\multirow{12}{*}{300} & \multirow{3}{*}{0.1}
& 0.9 &	      	      	  1.9 & 7,200 & 1.7(5) & 163,249 & 0.0 & 72 & 0.0\phantom{(5)} & 33 & 695 & 125 & 0 \\
& & 0.95 &                3.2 & 7,200 & 3.2(5) & 154,394 & 0.1 & 271 & 0.0\phantom{(5)} & 65 & 1075 & 384 & 14 \\
& & 0.975 &               4.8 & 7,200 & 4.7(5) & 138,175 & 0.3 & 709 & 0.0\phantom{(5)} & 233 & 1921 & 991 & 10 \\
\cline{2-14}
& \multirow{3}{*}{1}
& 0.9 &                   1.9 & 7,200 & 1.7(5) & 129,379 & 0.0 & 2,034 & 0.0(1) & 14,370 & 3,328 & 500 & 0 \\
& & 0.95 &                3.2 & 7,200 & 3.2(5) & 123,180 & 0.1 & 1,804 & 0.0(1) & 8,926 & 2,099 & 810 & 35 \\
& & 0.975 &               4.8 & 7,200 & 4.7(5) & 132,002 & 0.3 & 2,579 & 0.0(1) & 6,121 & 2,315 & 1,233 & 15 \\
\cline{2-14}
& \multirow{3}{*}{10}
& 0.9 &                   1.8 & 7,200 & 1.6(5) & 161,802 & 0.3 & 7,201 & 0.2(5) & 31,917 & 3,893 & 485 & 1 \\
& & 0.95 &                3.2 & 7,200 & 3.1(5) & 143,816 & 0.3 & 7,201 & 0.2(5) & 34,684 & 3,422 & 1,550 & 13 \\
& & 0.975 &               4.8 & 7,200 & 4.6(5) & 132,304 & 0.5 & 7,201 & 0.2(5) & 37,645 & 3,639 & 2,592 & 23 \\
\hline
\multicolumn{3}{c|}{ \textbf{avg} }  &$ \BD{2.8}$& $\BD{6483}$ & $\BD{2.1}$\phantom{(5)}&$\BD{164919}$
 & $\BD{0.1}$ &$\BD{1616}$&$\BD{0.0}$\phantom{(5)}&$\BD{7447}$&$\BD{1329}$&$\BD{482}$&$\BD{6}$ \\
\hline \hline
\end{tabular}
\end{table}

\autoref{tb:corr50_3cuts} presents the  results for confidence levels $1-\epsilon \in \{0.9, 0.95, 0.975\}$, problem sizes $n \in \{100, 300\}$, and 
scaling factors $\rho \in \{0.1, 1, 10\}$. 
As in the case of \eqref{eq:card},
the cuts result in significant improvements.
Out of 90 instances, the number of unsolved instances is reduced from 80 to 18, and the average root gap is reduced by 96$\%$.
Especially, for instances with $\rho \in \{0.1, 1\}$, almost all instances are solved to optimality well within the time limit
and the number of nodes is reduced by an order of magnitude. Even for $\rho = 10$, the end gap is reduced from 3.1\% to only 0.2\%.
As expected, the computational results indicate that inequalities \eqref{ineq:EPlift}, \eqref{ineq:EP_subset}, and \eqref{ineq:EP_mixed} are more effective when the covariance matrix is more diagonal-dominant, i.e., for smaller values of $\rho$. The lifted polymatroid
cuts are, nevertheless, valuable for the general correlated case as well.

\exclude{
\autoref{tb:corr} summarizes the result
for \eqref{eq:corr} instances with $\kappa = 0.4$ and $\rho = 0.01$ for problem sizes $n = 100, 300$.
This choice of parameters results in 'easier' instances since $\kappa = 0.4$
imposes a weaker constraint than $\kappa = 0.2$, and scaling down the entries of matrix $V$ reduces the effect of
having more complex objective function. Nevertheless, the average root gap is much greater than those of both \eqref{eq:pen} and \eqref{eq:card}
and none of the instances were solved to optimality for $n = 300$ with or without the cuts. }

\section{Conclusion}
\label{sec:conclusion}

In this paper we study a mixed 0-1 optimization with conic quadratic objective arising when modeling utilities with risk averseness. 
Exploiting the submodularity of the underlying set function for the binary restrictions, 
we derive three classes of strong convex valid inequalities by lifting of the polymatroid inequalities.
Computational experiments demonstrate the effectiveness of the lifted inequalities in a cutting plane framework.
The results indicate that the inequalities are very effective in strengthening the convex relaxations, and thereby,
reducing the solution times problems with fixed charges and cardinality constraints substantially. Although the inequalities are derived for the diagonal case, they
are also effective in improving the convex relaxations for the general correlated case.

\ignore{However, the inequalities that we generated were not the strongest possible as we did not solve the separation problem exactly.
Further analysis of the separation problem may provide a heuristic that results in stronger cuts
than the empirical approach we took in this study. Also, although the lifted inequalities are computationally beneficial, they do not suffice in describing the convex hull of the original problem.
One of the goals of our future research is to identify the inequalities necessary in characterizing the convex hull which are likely to be nonlinear.
Another direction of future study would be examining our cuts for more complicated real-life problems
that has $F$ as a substructure.
}

\section*{Acknowledgement} This research is supported, in part, by grant FA9550-10-1-0168 from the Office
of the Assistant Secretary of Defense for Research and Engineering.

\bibliographystyle{plain}
\bibliography{QIPref}


\section*{Appendix}

\label{subsec:solveR}

In this appendix we characterize the solutions to the continuous relaxation of the optimization problem (\cP)
and present an algorithm to solve it.  Let $(\tilde{x}, \tilde{y})$ denote an optimal solution to the relaxation.
As $c > 0$ it is clear that $\tilde{x}_i = \tilde{y}_i, i \in N$. Then letting $\tilde c = c + d$ ($< 0$ by assumption),
we can reduce the continuous relaxation of (\cP) 
to the following convex optimization problem:

\begin{align*}
		\min \ \ \ & \tilde c' y + \sqrt{\sigma + \sum_{i \in N} a_i y_i^2}  \nonumber \\
(\ropt) \ \ \quad		\text{s.t. }  -&y_i  \leq 0, \ i \in N \qquad  (\lambda_i) \\
		& y_i  \leq 1, \ i \in N.  \qquad (\mu_i)
\end{align*}

Assume that the variables are indexed so that 
\[\frac{\tilde c_1}{a_1} \leq  \frac{\tilde c_2}{a_2} \leq \cdots \leq \frac{\tilde c_n}{a_n} \cdot \]

\begin{prop}  \label{prop:relax-opt}
If $\tilde y$ is an optimal solution to (\ropt), then $\tilde y_i \ge \tilde y_k$, $ 1\le i < k \le n$.
\end{prop}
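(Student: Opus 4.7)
\textbf{Proof proposal for Proposition~\ref{prop:relax-opt}.}

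My plan is to argue by contradiction using a local exchange along a level set of the nonlinear term. Suppose there exist indices $i < k$ with $\tilde y_i < \tilde y_k$. By the indexing assumption $\tilde c_i/a_i \le \tilde c_k/a_k$, and since $\tilde c_j = c_j + d_j < 0$ for every $j$ (by the assumptions imposed on $(\cP)$), this inequality rearranges to $\tilde c_i a_k \le \tilde c_k a_i < 0$, i.e., $|\tilde c_i|/a_i \ge |\tilde c_k|/a_k$. I will construct a feasible perturbation of $\tilde y$ that preserves the radicand $\sum_j a_j y_j^2$ yet strictly decreases the linear part $\tilde c'y$, contradicting optimality of $\tilde y$.

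The construction is to hold $y_j = \tilde y_j$ for $j \notin \{i,k\}$ and move $(y_i,y_k)$ along the ellipse $a_i y_i^2 + a_k y_k^2 = a_i \tilde y_i^2 + a_k \tilde y_k^2$, in the direction of increasing $y_i$ and decreasing $y_k$. Since the radicand is fixed along this curve, the objective changes only through $\tilde c_i y_i + \tilde c_k y_k$. Parameterizing the move by $t$ with $y_i(0)=\tilde y_i$, $\dot y_i(0) = a_k \tilde y_k$, $\dot y_k(0) = -a_i \tilde y_i$ (which satisfies $a_i y_i \dot y_i + a_k y_k \dot y_k = 0$ at $t=0$), the directional derivative of the linear part is
\[
\tilde c_i a_k \tilde y_k - \tilde c_k a_i \tilde y_i,
\]
which is strictly negative whenever $\tilde y_i > 0$, since $\tilde c_i a_k \le \tilde c_k a_i < 0$ and $\tilde y_k > \tilde y_i > 0$. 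For the edge case $\tilde y_i = 0$, I will instead use $\dot y_i(0)=1$, $\dot y_k(0)=0$: the radicand then changes only at second order in $t$, while the linear part changes at first order with slope $\tilde c_i < 0$, so a small positive $t$ again strictly decreases the objective while keeping the square-root term's change $o(t)$; a standard first-order argument then shows the objective decreases.

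It remains to verify feasibility for sufficiently small $t>0$. The lower bound $y_k \ge 0$ is preserved because $\tilde y_k > \tilde y_i \ge 0$. The upper bound $y_i \le 1$ is preserved because $\tilde y_i < \tilde y_k \le 1$, so $\tilde y_i < 1$ and there is room to increase. Thus the perturbation is feasible, strictly reduces the objective, and contradicts optimality of $\tilde y$, completing the proof.

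The main obstacle I expect is the edge case $\tilde y_i = 0$, where the tangential perturbation degenerates ($\dot y_k(0) = 0$) and one must switch to a second-order argument; apart from that, the calculation is a routine exchange argument combined with the sign analysis of $\tilde c$.
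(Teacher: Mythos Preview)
Your exchange argument is correct and is a genuinely different route from the paper's proof. The paper proceeds via the KKT conditions for (\ropt): since the problem is convex with linear constraints, stationarity together with complementary slackness forces, for each $i$, either $\tilde y_i = 1$ with $-\tilde c_i/a_i \ge 1/\sqrt{\tilde\sigma}$, or $0<\tilde y_i<1$ with $\tilde y_i = -(\tilde c_i/a_i)\sqrt{\tilde\sigma}$. The monotonicity $\tilde y_i \ge \tilde y_k$ then follows by comparing these two cases against the ordering $\tilde c_i/a_i \le \tilde c_k/a_k$. Your approach instead perturbs $(y_i,y_k)$ along the level curve $a_i y_i^2 + a_k y_k^2 = \text{const}$, freezing the square-root term and reducing the question to the sign of $\tilde c_i a_k \tilde y_k - \tilde c_k a_i \tilde y_i$, which is indeed negative under the hypotheses. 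This is more elementary---no multipliers, no case analysis on active bounds---and the feasibility check is clean because $\tilde y_i < \tilde y_k \le 1$ leaves room to move. The paper's route, however, buys something extra: it produces the closed-form $\tilde y_i = \min\{1, -(\tilde c_i/a_i)\sqrt{\tilde\sigma}\}$, which is exactly what drives \autoref{alg:KKT}. A small remark: your separate treatment of $\tilde y_i = 0$ is unnecessary, since in that case the directional derivative $\tilde c_i a_k \tilde y_k - \tilde c_k a_i \cdot 0 = \tilde c_i a_k \tilde y_k$ is still strictly negative and the ellipse is nondegenerate at $(0,\tilde y_k)$; if you do keep the alternative perturbation $(\dot y_i,\dot y_k)=(1,0)$, note that the first-order bound on $\sqrt{\tilde\sigma + a_i t^2}-\sqrt{\tilde\sigma}$ requires $\tilde\sigma>0$, which you have since $\tilde y_k>0$.
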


\begin{proof}
Let $\lambda, \mu$ be the dual multipliers associated with the lower bound and upper bound constraints for $y$, respectively.
Since  (\ropt) is a convex optimization problem with linear constraints, the KKT conditions 
\begin{subequations}
\begin{align}
\tilde c_i + \frac{a_i}{\sqrt{\sigma + \sum_{j \in N} a_j y_j^2}} y_i - \lambda_i + \mu_i &= 0, \ i \in N \label{FOC1}
\end{align}

\begin{align}
\label{FEAS}
\begin{split}
\mathbf{0} \leq y \leq \mathbf{1} \\
\lambda, \mu \geq \mathbf{0}
\end{split}
\end{align}

\begin{align}
\label{CS}
\begin{split}
y_i \lambda_i &= 0, \ i \in N \\
(1-y_i) \mu_i &= 0, \ i \in N \\
\end{split}
\end{align}
\end{subequations}
are necessary and sufficient for optimality.

By complementary slackness (c.s.), observe that $\lambda_i$ and $\mu_i$ cannot be both positive. 
Therefore, there are three possible combinations of values for $\lambda_i$ and $\mu_i$, for $i \in N$:     
\begin{enumerate}
\item[1)] $\lambda_i > 0, \ \mu_i = 0$: 
c.s. implies $y_i = 0$, which by \eqref{FOC1} implies $\lambda_i = \tilde c_i < 0$, violating $\lambda_i \ge 0$.
Hence, this case is unattainable.
\item[2)] $\lambda_i = 0, \ \mu_i > 0$: c.s. implies $y_i = 1$. Then \eqref{FOC1} is written as
\begin{align*}
\mu_i = - \frac{a_i}{\sqrt{\sigma + \sum_{j \in N} a_j y_j^2}}   - \tilde c_i,
\end{align*}
which is dual feasible only if $-\frac{\tilde c_i}{a_i} \geq \frac{1}{\sqrt{\sigma + \sum_{j \in N} a_j y_j^2}} $.
\item[3)] $\lambda_i = \mu_i = 0$: 
In this case, \eqref{FOC1} reduces to
\begin{align*}
 - \tilde c_i = \frac{a_i}{\sqrt{\sigma + \sum_{j \in N} a_j y_j^2}} y_i ,
\end{align*}
which is feasible only if $-\frac{\tilde c_i}{a_i} \leq \frac{1}{\sqrt{\sigma +\sum_{j \in N} a_j y_j^2}} $.
\end{enumerate}

Hence, the assumption $\tilde c < 0$ implies $\tilde y > 0$ and, further, either one of the following holds for $i \in N$:
\begin{align*}
& (a) \ \tilde{y}_i = 1 \text{ and } \frac{ \tilde c_i}{a_i} \leq \frac{-1}{\sqrt{\tilde{\sigma}}}, \\
& (b) \ 0 < \tilde{y}_i < 1 \text{ and } \frac{\tilde c_i}{a_i} = \frac{-\tilde{y}_i}{\sqrt{\tilde{\sigma}}},
\end{align*}
where $\tilde{\sigma} := \sigma + \sum_{i \in N} a_i {\tilde{y}_i}^2.$ Then, if $\tilde{y}_k = 1$, since $\frac{\tilde c_i}{a_i} \le \frac{\tilde c_k}{a_k} \le \frac{-1}{\sqrt{\tilde{\sigma}}} $ for $i < k$, 
it follows that $\tilde{y}_i = 1$. 
If $\tilde{y}_k$ is fractional,  for $i < k$ 
\begin{align*}
\tilde{y}_k = -\frac{ \tilde c_k}{a_k} \sqrt{\tilde{\sigma}}
               \leq -\frac{ \tilde c_i}{a_i} \sqrt{\tilde{\sigma}}
\end{align*}
and, therefore, $\tilde{y}_k \le \tilde{y}_i = \min \Big\{1, -\frac{\tilde c_i}{a_i} \sqrt{\tilde{\sigma}}\Big\}$.
\end{proof}

There are two simple special cases where we can generate a closed form solution for KKT points.
\begin{remark}
If $- \tilde c_i > \frac{a_i}{\sqrt{\sigma + \sum_{i \in N} a_i}}, \ i \in N$, then $\tilde{y}_i = 1, i \in N$.
\end{remark}
\begin{remark}
If $\sigma + \sum_{i \in N} \frac{\tilde c_i^2}{a_i} = 1$ and $-\frac{\tilde c_i}{a_i} \leq 1, \ i \in N$,
then $\tilde{y}_i = -\frac{\tilde c_i}{a_i}, \ i \in N$.
\end{remark}

In the remainder, we give an algorithm that constructs a KKT point for (\ropt).
Defining the sets
$ N_1 := \{i \in N :  \tilde{y}_i = 1 \} $ and
$ N_f := \{i \in N : 0 < \tilde{y}_i < 1 \}$,
we can express $\tilde{\sigma}$ as
\begin{align*}
\tilde{\sigma} &= \sigma + \sum_{i \in N_1} a_i {\tilde{y}_i}^2 + \sum_{i \in N_f} a_i {\tilde{y}_i}^2 \\
       &= \sigma + \sum_{i \in N_1} a_i + \sum_{i \in N_f} a_i \left( -\frac{\tilde c_i}{a_i} \sqrt{\tilde{\sigma}}\right)^2   \\
       &= \sigma + \sum_{i \in N_1} a_i + \tilde{\sigma} \sum_{i \in N_f} \frac{ \tilde c_i^2}{a_i}. 
\end{align*}
Therefore, given $N_1$ and $N_f$, one can compute
\begin{align*}
\tilde{\sigma} (N_1, N_f) &= \frac{\sigma + \sum_{i \in N_1} a_i}{\left( 1 - \sum_{i  \in N_f} \frac{\tilde c_i^2}{a_i} \right)} \\
\tilde{y}_i &= -\frac{\tilde c_i}{a_i} \sqrt{\tilde{\sigma}(N_f, N_1)}, \  i \in N_f, \\
\tilde{y}_i &= 1, \ i \in N_1.
\end{align*}

\autoref{alg:KKT} describes how to construct $N_1$ and $N_f$.
Initially, $N_f = \emptyset$ and $N_1 = N$, i.e., $y_i = 1, \text{ for all } i \in N$.
At each iteration of \autoref{alg:KKT}, checks whether $\tilde{y}_p$ is fractional or one.
Either $p$ is moved from $N_1$ to $N_f$
and the incumbent over-estimation $\tilde{\sigma}(N_1, N_f)$ on $\tilde{\sigma}$ is updated accordingly, or it is determined that
$\tilde y_p = 1$ and the algorithm terminates as $\tilde y_i = 1, \text{ for all } i < p$ due to \autoref{prop:relax-opt}.
Observe that if the indices satisfy non-decreasing order of $\tilde c_i/a_i$, \autoref{alg:KKT} runs in $O(n)$ time.

\begin{algorithm}
\DontPrintSemicolon
\SetAlgoNoEnd\SetAlgoNoLine%
 {\texttt{0. \hskip -3mm Initialize}} \\ \texttt{Set} $N_f = \emptyset$, $N_1 = N$, $\tilde{\sigma} = \sigma + \sum_{i \in N} a_i$, $p =n $. \\
 \hrule
 \bigskip
 \texttt{1. \hskip -3mm Update} \\
\quad \uIf{$-\frac{\tilde c_p}{a_p} \geq \frac{1}{\sqrt{\tilde{\sigma}}}$ or $p=0$}{ \texttt{go to step 2.}}
 \quad \Else{ $N_f \leftarrow N_f \cup \{p\}$ \;
           $N_1 \leftarrow N_1 \setminus \{p\}$ \;	
             $\tilde{\sigma} \leftarrow \tilde{\sigma} (N_1, N_f)$ \\ 
           $p \leftarrow p-1$  \\ \texttt{Repeat step 1.} }
\hrule
\bigskip
 \texttt{2. \hskip -3mm Terminate} \\
	\quad \texttt{Return} \;
$ \quad \tilde{y}_i = -\frac{\tilde c_i}{a_i} \sqrt{\tilde{\sigma}}, \  i \in N_f $ \;
$\quad \tilde{y}_i = 1, \  i \in N_1$
 \caption{KKT point construction for (\ropt).}
 \label{alg:KKT}
\end{algorithm}

\end{document}